\title{Noncommutative Bell polynomials, quasideterminants and incidence Hopf algebras}
\author{
Kurusch Ebrahimi-Fard\footnote{\small ICMAT, Calle Nicolás Cabrera 13-15, Campus de Cantoblanco, UAM, Madrid, Spain. On leave from Université de Haute Alsace, 18 Rue des Frères Lumière, 68093 Mulhouse Cedex, France.
{\small{kurusch@icmat.es}}}
\and
Alexander Lundervold\footnote{INRIA Bordeaux, 200 Avenue de la Vieille Tour, 33405 Talence Cedex, France.
{\small{alexander.lundervold@gmail.com}}}
\and
Dominique Manchon\footnote{Université Blaise Pascal, 24 Avenue des Landais
Les Cézeaux, BP 80026, F63177 Aubière Cedex, France.
{\small{manchon@math.univ-bpclermont.fr}}}}
\begin{document}

\maketitle

\tableofcontents

\begin{abstract}
Bell polynomials appear in several combinatorial constructions throughout mathematics. Perhaps most naturally in the combinatorics of set partitions, but also when studying compositions of diffeomorphisms on vector spaces and manifolds, and in the study of cumulants and moments in probability theory. We construct commutative and noncommutative Bell polynomials and explain how they give rise to Faà di Bruno Hopf algebras. We use the language of incidence Hopf algebras, and along the way provide a new description of antipodes in noncommutative incidence Hopf algebras, involving quasideterminants. We also discuss Möbius inversion in certain Hopf algebras built from Bell polynomials.
\end{abstract}

\keywords{Bell polynomials; partitions; quasideterminants; Faà di Bruno formulas; incidence Hopf algebras.}

\let\thefootnote\relax\footnote{\emph{2010 Mathematics Subject Classification.}06A11, 16T30, 05A18}

\section{Introduction}

In \cite{bell1927pp, bell1934ep} E.T. Bell introduced a family of commutative polynomials related to set partitions, named Bell polynomials by Riordan \cite{riordan1958ait}. Noncommutative versions were introduced by Schimming in \cite{schimming1996nbp} and by Munthe-Kaas in \cite{munthe-kaas1995lbt}, the latter in the setting of numerical integration on manifolds. They also appeared recently in relation to quasi-symmetric functions in \cite[Section 4.5]{novelli2013bsb}. In this work we study various descriptions of commutative and noncommutative Bell polynomials, both recursive and explicit, via partitions of sets, determinants and quasideterminants. We also investigate the link to \emph{Faà di Bruno formulas} describing compositions of diffeomorphisms. The classical Faà di Bruno formula expresses derivatives of compositions of functions on $\mathbb{R}$ as
\begin{align}
  \frac{d^n}{dx^n} f(g(x)) = \sum_{k=0}^n f^{(k)}(g(x)) B_{n,k}\left(g'(x), g''(x), \dots, g^{(n-k+1)}(x)\right),
\end{align}
where $B_{n,k}$ are the commutative (partial) Bell polynomials. As explained in Johnson's fascinating historical account \cite{johnson2002tch}, what is now called the Faà di Bruno formula was actually discovered and studied many times prior to Faà di Bruno's work. However, he did obtain a new determinantal formulation, related to a general determinantal formula for commutative Bell polynomials. In the present paper we generalize his result by obtaining a \emph{quasi-}determinantal formula for noncommutative Bell polynomials (Section \ref{Sect:quasideterminants}).

Trying to capture the Faà di Bruno formula algebraically leads to a Hopf algebra, called the Faà di Bruno Hopf algebra. The more general setting of diffeomorphisms on manifolds leads to the \emph{Dynkin Faà di Bruno Hopf algebra}. The main result linking diffeomorphisms to noncommutative Bell polynomials is a formula expressing the pullback of a function $\psi$ along a (time-dependent) vector field $F_t$:
\begin{align}
	\frac{d^n}{dt^n} \Phi_{t,F_t}^* \psi = B_n(F_t)[\psi],
\end{align}
where $B_n$ is a noncommutative Bell polynomial (Section \ref{Sect:FdBManifolds}).

In Section \ref{Sect:incidenceHopf} we formulate these Hopf algebras as \emph{incidence Hopf algebras}, in which the Bell polynomials are the so-called \emph{rank polynomials} of the underlying posets. In \cite{schmitt1994iha} the antipodes in a class of commutative incidence Hopf algebras were described as determinants of certain polynomials. We extend this result to noncommutative incidence Hopf algebras using quasideterminants. 

We end with a short section (Section \ref{Sect:mobius}) formulating a theory of Möbius inversion for Hopf algebras built from Bell polynomials, which allows us to express the indeterminats in terms of the Bell polynomials.

\section{Constructions}
\label{Sect:constrBell}

We present ways to construct the Bell polynomials, both in commuting and noncommuting variables.

\subsection{Recursive descriptions}
\label{Sect:noncommBell}

Bell polynomials have several convenient recursive descriptions. One of the advantages of these is that they are valid regardless of whether the underlying algebraic setting is commutative or not. Explicit formulas can be found in Section \ref{Sect:bellAndPart}.

\subsubsection{Basic recursive descriptions}
\label{Sect:basicRecursive}

Consider an alphabet $\{d_i\}$, where the letters are indexed by natural numbers, and graded by $|d_i| = i$. The space of \emph{words} in this alphabet, $\mathcal{D} = \K\langle \{d_i\} \rangle$, comes equipped with the concatenation operation, and is graded by $|d_{j_1} \cdots d_{j_k}| = |d_{j_1}| + \cdots + |d_{j_k}| = j_1 + \cdots + j_k$, extended linearly. We also equip $\mathcal{D}$ with a linear derivation $\partial: \mathcal{D} \rightarrow \mathcal{D}$ defined as 
\begin{align}
	\partial(d_i) = d_{i+1},
\end{align}
and extended to words by the Leibniz rule. Write $\mathbb{I}$ for the empty word in $\mathcal{D}$. Iteratively multiplying with an element from the left plus a derivation generates the Bell polynomials in $\mathcal{D}$:

\begin{definition}
\label{def:recursiveBell}
The \emph{Bell polynomials} are defined recursively by
\begin{align}
	B_0 &= \mathbb{I} \\
	B_n &= (d_1 + \partial) B_{n-1}, \quad n > 0.
\end{align}
\end{definition}

Whether these are the commutative or noncommutative Bell polynomials is determined by whether the $d_i$ commute. Note that since $\partial(\mathbb{I}) = 0$ we can write
\begin{align}
	B_n = (d_1 + \partial)^n \mathbb{I}.
\end{align}
A simple induction gives the following alternative description:

\begin{proposition}
\label{prop:recursionBell}
The Bell polynomials (commutative or noncommutative) satisfy the recursion
\begin{align}
	B_{n+1} &= \sum_{k=0}^n {n \choose k} B_{n-k} d_{k+1}\\
	B_0 &= \mathbb{I}.
\end{align} 
\end{proposition}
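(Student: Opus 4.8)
The plan is to prove the recursion $B_{n+1} = \sum_{k=0}^n \binom{n}{k} B_{n-k} d_{k+1}$ by induction on $n$, using the basic recursive definition $B_{n+1} = (d_1 + \partial) B_n$ from Definition \ref{def:recursiveBell} as the engine. Let me sketch the key relationships first.

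Let me think about what drives this. We have $B_n = (d_1+\partial)^n \mathbb{I}$, and we want to commute the operator $(d_1+\partial)$ past the words. The key observation is how $\partial$ interacts with left-multiplication. Since $\partial$ is a derivation, for any word $w$ we have $\partial(d_1 w) = d_2 w + d_1 \partial(w)$. More generally I'd want to understand $\partial^k$ applied to things.

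Let me set up the induction. The base case $n=0$ gives $B_1 = \binom{0}{0} B_0 d_1 = \mathbb{I} d_1 = d_1$, which matches $B_1 = (d_1+\partial)\mathbb{I} = d_1$. For the inductive step, assume the formula holds for $n$. I want to compute $B_{n+1}$.

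Here is the cleaner approach I'd actually pursue. Rather than inducting directly on the stated formula, I would track how the operator acts. The core lemma I'd establish is a \emph{commutation relation}: for any element $w \in \mathcal{D}$,
\begin{align}
(d_1 + \partial)^n (d_{k+1} w) = \sum_{j=0}^n \binom{n}{j}\, (d_1+\partial)^{n-j}\big(d_{k+1+j}\, w\big) \quad \text{(wrong track — refine below).}
\end{align}
Let me instead reason more carefully. The right statement comes from expanding $B_{n+1}=(d_1+\partial)B_n$ and massaging. Apply the definition: $B_{n+1} = d_1 B_n + \partial B_n$. By the inductive hypothesis, $B_n = \sum_{k=0}^{n-1}\binom{n-1}{k} B_{n-1-k} d_{k+1}$. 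The term $d_1 B_n$ produces $\sum_k \binom{n-1}{k} d_1 B_{n-1-k} d_{k+1}$. For the $\partial$ term, since $\partial$ is a derivation and $B_{n-1-k} d_{k+1}$ is a product, $\partial(B_{n-1-k} d_{k+1}) = (\partial B_{n-1-k}) d_{k+1} + B_{n-1-k} d_{k+2}$.

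The main obstacle, and the heart of the proof, is the bookkeeping that reassembles $d_1 B_{n-1-k} + \partial B_{n-1-k}$ into $B_{n-k}$ via the definition, while simultaneously managing the index shift $d_{k+1}\mapsto d_{k+2}$ coming from the derivation hitting the rightmost letter. The Pascal identity $\binom{n-1}{k}+\binom{n-1}{k-1}=\binom{n}{k}$ is what makes the two contributions — the ``$B_{n-1-k}$ advances to $B_{n-k}$'' piece and the ``$d_{k+1}$ advances to $d_{k+2}$'' piece — combine into the single clean sum $\sum_{k=0}^n \binom{n}{k} B_{n-k} d_{k+1}$. Concretely, after applying $B_{n-k} = (d_1+\partial)B_{n-1-k} = d_1 B_{n-1-k} + \partial B_{n-1-k}$, the first group of terms becomes $\sum_k \binom{n-1}{k} B_{n-k} d_{k+1}$, and the index-shifted second group becomes $\sum_k \binom{n-1}{k} B_{n-1-k} d_{k+2} = \sum_{k'\geq 1}\binom{n-1}{k'-1} B_{n-k'} d_{k'+1}$; adding these and invoking Pascal's rule yields exactly $\sum_{k=0}^n \binom{n}{k} B_{n-k} d_{k+1}$, taking care that the boundary terms ($k=0$ and $k=n$) match the binomial coefficients. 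I expect verifying those boundary contributions and the reindexing to be the only genuinely delicate point; everything else is mechanical.
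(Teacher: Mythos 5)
Your argument is correct and is precisely the ``simple induction'' the paper invokes without spelling out: expand $B_{n+1}=(d_1+\partial)B_n$ via the inductive hypothesis, apply the Leibniz rule to the rightmost letter, reassemble $d_1B_{n-1-k}+\partial B_{n-1-k}=B_{n-k}$, and conclude with Pascal's rule, with the boundary terms $k=0$ and $k=n$ checking out as you describe. The abandoned ``commutation relation'' detour is dead weight and should simply be deleted from the final write-up.
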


\paragraph{Examples.} Here are the first few noncommutative Bell polynomials. The number of terms grows exponentially, with $2^{n-1}$ terms in $B_n$.
\begin{align}
\begin{array}{l|c}
B_0 & 1 \\[4pt] \hline \\[-4pt]
B_1 & d_1\\[4pt] \hline \\[-4pt]
B_2 & d_1^2 + d_2\\[4pt] \hline \\[-4pt]
B_3 & d_1^3 + d_2 d_1 + 2d_1 d_2 + d_3\\[4pt] \hline \\[-4pt]
B_4 & d_1^4 + 3d_1^2 d_2 + 3d_2^2 + d_3 d_1 + d_2 d_1^2 + 2d_1d_2d_1 + 3d_1d_3 + d_4\\[4pt] \hline \\[-4pt]
B_5 & d_1^5 + 6d_1^2d_3 + 6d_2d_3 + 4d_3d_2 + 4d_1^3d_2 + 4d_2d_1d_2 + 8d_1d_2^2 + \\[4pt] 
& d_4d_1 + 3d_1^2d_2d_1 + 3d_2^2 d_1 + d_3d_1^2 + d_2d_1^3 + 2d_1d_2d_1^2 + 3d_1d_3d_1 + 4d_1d_4 + d_5
\end{array}
\end{align}
The coefficients in these polynomials are intriguing, and will be described in detail in Section \ref{Sect:bellAndPart}. 

The \emph{grade} of a word $\omega$ in the polynomial $B_n$ is $|\omega| = n$. We can also consider the \emph{length} of the words, written $\#(\omega)$. This leads to the \emph{partial Bell polynomial} $B_{n,k}$, which is the part of $B_n$ consisting of words of length $k$. For example, 
\begin{align}
	B_{3,2} = d_2 d_1 + 2 d_1 d_2.
\end{align}
The scaled scaled Bell polynomials defined as 
\begin{align}
\label{Qpolys}
	Q_n = \sum_k Q_{n,k}, \quad \text{where  } Q_{n,k} = \frac{1}{n!} B_{n,k}(1!d_1, 2!d_2, 3!d_3, \dots). 
\end{align}
will be of interest later. 
\begin{align}
	Q_2 &= \frac{1}{2}\big(d_1^2 + 2d_2\big)\\
	Q_3 &= \frac{1}{6}\big(d_1^3 + 2d_2d_1 + 4d_1d_2 + 6d_3\big).
\end{align}

\subsubsection{Description in terms of trees}

Using combinatorial trees one can give another simple recursive description for the Bell polynomials. A \emph{rooted tree} is a finite simple graph without cycles, with a distinguished vertex called the root. A \emph{rooted forest} is a graph whose connected components are rooted trees. Write $T$ for the set of rooted trees, and $\T = \K\langle T \rangle$. The operation $B^+$ from forests to trees adds a common root to a forest. Any rooted tree can be written as $B^+$ of a forest, $t = B^+(t_1, t_2, \dots, t_n)$. The \emph{left Butcher product} $\lbutch: \T \otimes \T \rightarrow \T$ is defined as 
\begin{align}
	s \lbutch t = B^+(s, t_1, t_2, \dots, t_n).
\end{align}
The operation $\tlgraft: \T\otimes \T \rightarrow \T$ is given by grafting a tree to the leaves of another tree. For example,
\begin{align}
	\aabb\ \tlgraft\ \aaabbb =  \aaaaabbbbb,\quad
	\aabb\ \tlgraft \aababb = \aaaabbbabb + \aabaaabbbb.
\end{align}
We associate a letter $d_i$ to the ladder tree with $i$ edges. E.g. 
\begin{align}
	d_0 \sim \ab, \quad d_1 \sim \aabb, \quad d_2 \sim \aaabbb.
\end{align}
Writing $\hat{d}_l$ for the ladder tree with $l$ edges, the concatenation operation on letters $d_i d_j$ corresponds to the left Butcher product $\hat{d}_{i-1} \lbutch \hat{d}_j$. The derivation operation $\partial(d_i) = d_{i+1}$ corresponds to left grafting on the leaf of $\hat{d}_i$, that is, $\partial = \ab \tlgraft \_$. The next result then follows from the definition of Bell polynomials.

\begin{proposition} 
\label{prop:BellTrees}
The Bell polynomials can be generated recursively by
\begin{align}
	\hat{B}_n = \ab (\lbutch  + \tlgraft) \hat{B}_{n-1}
\end{align} 
\end{proposition}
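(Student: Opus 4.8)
The plan is to turn the informal dictionary between words and trees into one genuine linear map, and then to check that it carries the two building blocks of Definition~\ref{def:recursiveBell}, namely left multiplication by $d_1$ and the derivation $\partial$, to the tree operations $\ab\lbutch\_$ and $\ab\tlgraft\_$. Concretely I would define $\Psi\colon\mathcal{D}\to\T$ on words by $\Psi(\mathbb{I})=\mathbb{I}$ and recursively $\Psi(d_i\omega)=\hat{d}_{i-1}\lbutch\Psi(\omega)$, extended linearly. A short induction shows that $\Psi$ is grade-preserving (the Butcher product $s\lbutch t$ carries one more edge than $s$ and $t$ together) and that it realises the correspondences recorded before the proposition, e.g.\ $\Psi(d_i)=\hat{d}_{i-1}\lbutch\mathbb{I}=B^+(\hat{d}_{i-1})=\hat{d}_i$ and $\Psi(d_id_j)=\hat{d}_{i-1}\lbutch\hat{d}_j$. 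The first intertwining relation is then immediate: setting $i=1$ in the defining recursion gives $\Psi(d_1\omega)=\hat{d}_0\lbutch\Psi(\omega)=\ab\lbutch\Psi(\omega)$.

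The heart of the argument is the second relation, $\Psi(\partial\omega)=\ab\tlgraft\Psi(\omega)$, which I would prove by induction on the length of $\omega$. Writing $\omega=d_i\omega'$ and applying the Leibniz rule gives $\partial\omega=d_{i+1}\omega'+d_i\,\partial\omega'$, so that
\begin{align}
\Psi(\partial\omega)=\hat{d}_i\lbutch\Psi(\omega')+\hat{d}_{i-1}\lbutch\Psi(\partial\omega')=\hat{d}_i\lbutch\Psi(\omega')+\hat{d}_{i-1}\lbutch\big(\ab\tlgraft\Psi(\omega')\big),
\end{align}
the last equality by the induction hypothesis. On the other side, the leaves of $\Psi(\omega)=\hat{d}_{i-1}\lbutch\Psi(\omega')$ split into the tip of the freshly adjoined ladder branch $\hat{d}_{i-1}$ and the leaves of $\Psi(\omega')$; grafting $\ab$ onto the tip lengthens that branch to $\hat{d}_i$ and produces $\hat{d}_i\lbutch\Psi(\omega')$, while grafting onto the remaining leaves (which lie away from the root, so this commutes with adjoining $\hat{d}_{i-1}$) produces $\hat{d}_{i-1}\lbutch(\ab\tlgraft\Psi(\omega'))$. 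Matching the two sides closes the induction.

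Granting both relations, the proposition drops out by applying $\Psi$ to the recursion $B_n=(d_1+\partial)B_{n-1}$ of Definition~\ref{def:recursiveBell}:
\begin{align}
\hat{B}_n=\Psi(B_n)=\ab\lbutch\hat{B}_{n-1}+\ab\tlgraft\hat{B}_{n-1}=\ab(\lbutch+\tlgraft)\hat{B}_{n-1}.
\end{align}

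The step I expect to be most delicate is the base of the induction, i.e.\ the behaviour at the empty word, where the naive leaf-grafting picture and the derivation disagree: $\partial\mathbb{I}=0$, yet the one-vertex tree is itself a leaf. The fix is to insist that $\Psi(\mathbb{I})$ be the empty forest $\mathbb{I}$ rather than the single node $\ab$, with the conventions $\ab\tlgraft\mathbb{I}=0$ and $s\lbutch\mathbb{I}=B^+(s)$; with this bookkeeping the case $\omega=d_i$ checks out ($\ab\tlgraft\hat{d}_i=\hat{d}_{i+1}=\Psi(\partial d_i)$) and the spurious term that would otherwise appear at $n=1$ is suppressed, giving $\hat{B}_1=\ab\lbutch\mathbb{I}=\aabb$. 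A final remark worth making is that, for planar (ordered) trees, $\Psi$ is injective on the span of words in $d_1,d_2,\dots$ together with $\mathbb{I}$, so the tree recursion faithfully encodes the noncommutative $B_n$ and not merely a symmetrisation of them.
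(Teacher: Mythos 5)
Your proposal is correct and takes essentially the same route as the paper, whose proof is just the assertion that the recursion follows from the dictionary set up beforehand (concatenation $\leftrightarrow$ left Butcher product, $\partial \leftrightarrow \ab\,\tlgraft\,\_$); you simply promote that dictionary to a precise linear map $\Psi$ and verify the two intertwining relations by induction on word length. Your base-case bookkeeping, taking $\Psi(\mathbb{I})$ to be the empty forest with $\ab\,\tlgraft\,\mathbb{I}=0$ and $s\lbutch\mathbb{I}=B^+(s)$, correctly resolves the one genuine wrinkle that the paper's convention $\hat{B}_0=\ab$ glosses over (naively, grafting onto the single-vertex tree would give $\hat{B}_1=2\,\aabb$).
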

The first four noncommutative Bell polynomials correspond to
\begin{align}
& \hat{B}_0 = \ab, \quad \hat{B}_1 = \aabb, \quad \hat{B}_2 = \aababb + \aaabbb,\\
& \hat{B}_3 = \aabababb + \aaabbabb + 2 \aabaabbb + \aaaabbbb, \\
& \hat{B}_4 = \aababababb + 3\aababaabbb + 3\aaabbaabbb + \aaaabbbabb + \aaabbababb + 2\aabaabbabb + 3\aabaaabbbb + \aaaaabbbbb
\end{align}
Note that if the trees are nonplanar (i.e. the order of the branches is insignificant) we obtain the commutative Bell polynomials. If planar, the noncommutative Bell polynomials. 

\begin{remark}
There is a link between the Bell polynomials and the so-called \emph{natural growth operator} on trees. This is also related to \emph{Lie--Butcher series} and the flow of differential equations on (homogeneous) manifolds. See \cite{lundervold2011hao}. The link goes via the so-called \emph{Grossman--Larson product}, and is currently being investigated.
\end{remark}

\subsection{Explicit formulas}
\label{Sect:bellAndPart}

Bell polynomials can be given several explicit descriptions. We begin with determinantal descriptions, then descriptions via partitions, and finally a description related to the Dynkin idempotent.

\subsubsection{Determinants and quasideterminants}
\label{Sect:quasideterminants}

It is well known that the classical Bell polynomials of \cite{bell1927pp} can be defined in terms of determinants (see e.g. \cite{schimming1999dpe}). For example:
\begin{align}
\left|\begin{array}{@{}*{3}{r}@{}}
d_1 &  {3-2 \choose 1} d_2& {3-1 \choose 2 }d_3  \\ \\
-1 & d_1 & {3-1 \choose 1} d_2 \\ \\
0 & -1  & d_1
\end{array}\right|
&=
\left|\begin{array}{@{}*{3}{r}@{}}
d_1 & d_2 & d_3  \\  \\
-1 & d_1 & 2d_2  \\  \\
0 &  -1  & d_1 
\end{array}\right|
&\\
&= d_1^3+3d_1 d_2 + d_3\\
&= B_3(d_1,d_2,d_3).
\end{align}
The result can be found indirectly already in Fa\`a di Bruno's work (\cite{faadibruno1855ssd, faadibruno1857nsu}) from the 1850s.

\begin{theorem}
The commutative Bell polynomial $B_n$ can be written as
\begin{align}
	B_n(d_1, \dots, d_n) = |\mathbf{B}_n|,
\end{align}
where $\mathbf{B}_n$ is the $n \times n$ matrix whose second diagonal elements are $-1$, the lower elements all zero, and the remaining entries are given by 
\begin{align}
	(\mathbf{B}_n)_{ij} = {n - (n - j +1) \choose j-i} d_{j-i+1}, \qquad \text{for  } i \leq j.
\end{align}
\end{theorem}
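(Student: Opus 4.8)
The plan is to show that the determinant $D_n := |\mathbf{B}_n|$ obeys exactly the recursion of Proposition \ref{prop:recursionBell}, so that $D_n = B_n$ follows by induction. First I would simplify the entries: since $n-(n-j+1)=j-1$, the stated entry is $(\mathbf{B}_n)_{ij}=\binom{j-1}{j-i}d_{j-i+1}$, which does \emph{not} depend on $n$. This is the crucial structural observation — the matrices are nested, so the leading principal $m\times m$ submatrix of $\mathbf{B}_n$ is precisely $\mathbf{B}_m$. Combined with the subdiagonal of $-1$'s and zeros below it, $\mathbf{B}_n$ is upper Hessenberg. The base cases are immediate: $D_0=1=B_0$ (empty determinant) and $D_1=d_1=B_1$.

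Next I would expand $\det\mathbf{B}_n$ along its last column. Since a subdiagonal entry in column $n$ would sit in row $n+1$, the entire last column consists of the formula entries $(\mathbf{B}_n)_{in}=\binom{n-1}{n-i}d_{n-i+1}$, giving
\[
D_n=\sum_{i=1}^n(-1)^{i+n}\binom{n-1}{n-i}d_{n-i+1}\,M_{in},
\]
where $M_{in}$ is the minor obtained by deleting row $i$ and column $n$.

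The technical heart — and the step I expect to be the main obstacle — is evaluating $M_{in}$. Here I would exploit the Hessenberg shape. After deleting column $n$, rows $i+1,\dots,n$ have nonzero entries only in columns $\ge i$ (each row $r$ vanishes to the left of column $r-1$). Ordering the surviving rows as $1,\dots,i-1,i+1,\dots,n$, the minor acquires block upper-triangular form $\left(\begin{smallmatrix}A & B\\ 0 & C\end{smallmatrix}\right)$, where $A$ is the leading $(i-1)\times(i-1)$ block, equal to $\mathbf{B}_{i-1}$ by the nesting observation, and $C$ is the square block on rows $i+1,\dots,n$ and columns $i,\dots,n-1$. That block $C$ is itself upper triangular with $-1$ along its diagonal (its diagonal entries are exactly the subdiagonal $-1$'s, and the entries below vanish), so $\det C=(-1)^{n-i}$. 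Hence $M_{in}=(-1)^{n-i}D_{i-1}$.

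Substituting back, the signs collapse since $(-1)^{i+n}(-1)^{n-i}=1$, leaving $D_n=\sum_{i=1}^n\binom{n-1}{n-i}d_{n-i+1}D_{i-1}$. Setting $k=n-i$ and using commutativity to reorder $d_{k+1}D_{n-1-k}=D_{n-1-k}\,d_{k+1}$ yields
\[
D_n=\sum_{k=0}^{n-1}\binom{n-1}{k}D_{n-1-k}\,d_{k+1},
\]
which is precisely the recursion of Proposition \ref{prop:recursionBell} with $n$ shifted to $n-1$. By induction $D_n=B_n$, completing the proof. The only genuinely delicate point is the block decomposition of the minor; everything else is routine cofactor bookkeeping and a change of summation index.
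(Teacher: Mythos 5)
Your proof is correct, and it follows essentially the same route as the paper: the paper states the commutative theorem as classical (citing references) and proves its noncommutative analog by expanding $\qdet{\mathbf{B}_{n+1}}_{1n}$ along the last column via Proposition \ref{quasidetExpansion} to recover the recursion of Proposition \ref{prop:recursionBell}, which is precisely your strategy in the commutative setting. Your careful cofactor bookkeeping — the nesting observation $n-(n-j+1)=j-1$, the block-triangular minor $M_{in}=(-1)^{n-i}D_{i-1}$, and the sign cancellation $(-1)^{i+n}(-1)^{n-i}=1$ — simply spells out in classical determinant language what the quasideterminant expansion formula packages automatically.
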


It turns out that the noncommutative Bell polynomials have a rather similar description, in terms of a noncommutative analog of the determinant: the \emph{quasideterminants} of Gelfand and Retakh (\cite{gelfand1991dom}, see also \cite{gelfand2005q})\footnote{The link between noncommutative Bell polynomials and quasideterminants was first remarked upon in \cite{lundervold2011oas}}. Note that a detailed account of the theory of quasideterminants is beyond the scope of this paper. We content ourselves with recalling the definition and some simple consequences. For more details the reader may consult the references given above.    

Write $A^{pq}$ for the matrix obtained by deleting the $p$th row and $q$th column of square matrix $A$.

\begin{definition}
\label{def:qdet} 
The quasideterminant $|A|_{pq}$ of order $pq$ of the matrix $A$ consisting of $n^2$ noncommuting indeterminates $a_{ij}$, $1\leq i,j \leq n$, is defined by
\begin{align}
	|A|_{pq} = a_{pq} - \sum_{i\neq p, j\neq q} a_{pj} (|A^{pq}|_{ij})^{-1} a_{iq}.
\end{align}
\end{definition}
For example, we have 
\begin{align}
\left|\begin{array}{ccc}
x_1 & x_2 & \text{\circled{$x_3$}}  \\  \\
-1 & x_1 & 2x_2  \\  \\
0 &-1  & x_1  
\end{array}\right| = x_1^3+2x_1 x_2 + x_2 x_1 + x_3,
\end{align}
where we circle the element corresponding to the quasideterminant.  

Quasideterminants also satisfy a slightly simpler looking formula
\begin{align}
	|A|_{pq} = a_{pq} - \sum_{i\neq p, j\neq q} a_{pj} ((A^{pq})^{-1})_{ij} a_{iq},
\end{align}
which yields a nice pictorial description of quasideterminants:\footnote{Picture source: Wikipedia / Aaron Lauve}
\begin{center}
	\includegraphics[width=0.8\textwidth]{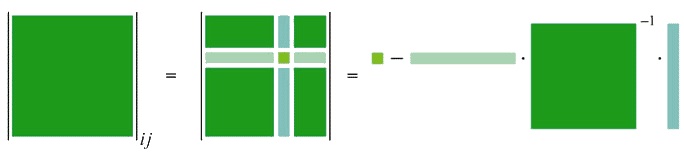}
\end{center}
As the entries of $A^{pq}$ are noncommutative, it is not obvious how one should interpret the inverse $(A^{pq})^{-1}$. It involves quasideterminants of submatrices, similar to the commutative calculation of the inverse in terms of cofactors. See \cite{gelfand2005q}.

\begin{remark}\label{commutativeQuasi} If the elements of the matrix commute then the quasideterminant is equal to the classical determinant divided by a minor:
\begin{align}
	|A|_{pq} = (-1)^{p+q}\frac{\det A}{\det A^{pq}}.
\end{align}
\end{remark}

\begin{remark}
\label{commQuasiPoly} 
If all the elements commute, then the minor $\det A^{1n}$ of the above matrix is $\det A^{1n} = (-1)^{n-1}$, and, by Remark \ref{commutativeQuasi}, the quasideterminant is equal to the classical determinant.
\end{remark}

In the above example the quasideterminant was a polynomial (in fact, the third noncommutative Bell polynomial, $B_3$), which is of course not true in general. However, certain quasideterminants are guaranteed to be polynomials in their entries, and they can be described explicitly. This description simplifies many of our calculations.

\begin{proposition}[{\cite[Proposition 1.2.9]{gelfand2005q}}]\label{quasiPoly}
The following quasideterminant is polynomial in its entries and has a nonrecursive description.
\begin{align}
P(n) &
= \left|\begin{array}{ccccc}
a_{11} & a_{12} & a_{13} & \cdots  & \text{\circled{$a_{1n}$}}\\  \\
-1 & a_{22} & a_{23} & \cdots & a_{2n} \\ \\
0 & -1 & a_{33} & \cdots & a_{3n}\\ \\
 & \cdots &  &  \\  \\
0 & \cdots & 0  & -1 & a_{nn} 
\end{array}\right|
\\ &\quad = a_{1n} + \sum_{1 \leq j_1 < j_2 < \cdots < j_k < n} a_{1j_1} a_{j_1+1,j_2} a_{j_2+1,j_3} \cdots a_{j_k+1,n}.
\end{align}
\end{proposition}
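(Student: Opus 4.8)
The plan is to prove the formula by induction on the size $n$ of the matrix, expanding the quasideterminant along its defining recursion. Let me denote the claimed right-hand side by
\begin{align}
S(n) = a_{1n} + \sum_{1 \leq j_1 < j_2 < \cdots < j_k < n} a_{1j_1} a_{j_1+1,j_2} a_{j_2+1,j_3} \cdots a_{j_k+1,n},
\end{align}
where the sum ranges over all chains of intermediate indices (including the empty chain, which gives the term $a_{1n}$). First I would verify the base case $n = 1$, where $P(1) = a_{11} = S(1)$, and perhaps $n = 2$ to make the pattern transparent.

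For the inductive step, the key observation is that the matrix in question is bidiagonal-plus-upper-triangular with $-1$'s on the subdiagonal, and this special structure makes the inverse $(A^{1n})^{1,j}$ tractable. I would apply Definition \ref{def:qdet} with $p = 1$, $q = n$, giving
\begin{align}
P(n) = a_{1n} - \sum_{i \neq 1, j \neq n} a_{1j} \big(|A^{1n}|_{ij}\big)^{-1} a_{in}.
\end{align}
The crucial point is to understand the submatrix $A^{1n}$, obtained by deleting the first row and last column: this is again an $(n-1) \times (n-1)$ matrix of the same bidiagonal-upper-triangular shape, but now its subdiagonal $-1$'s occupy the \emph{main} diagonal, so $A^{1n}$ is itself lower-unitriangular up to sign. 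The heart of the argument is to show that the relevant quasideterminants $|A^{1n}|_{ij}$ of this shifted matrix telescope, so that multiplying by $(A^{1n})^{-1}$ and contracting against the column vector $(a_{in})$ reproduces exactly the chain sums in $S(n)$ with one fewer factor, which then combine with the leading $a_{1n}$ to give $S(n)$.

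Concretely, I expect the cleanest route is to use the alternative formula $|A|_{1n} = a_{1n} - \sum_{i,j} a_{1j}\,((A^{1n})^{-1})_{ij}\,a_{in}$ and to compute $(A^{1n})^{-1}$ directly. Because $A^{1n}$ is a bidiagonal matrix whose entries below the relevant diagonal are $-1$ and which is strictly upper-triangular otherwise, its inverse has an explicit expansion: the $(i,j)$-entry of the inverse is itself a signed sum over chains of the intermediate $a$-entries. Substituting this expansion and collecting terms should match each chain $1 \le j_1 < \cdots < j_k < n$ in $S(n)$ with a unique path through $(A^{1n})^{-1}$, with the signs from the $-1$'s and the $(-1)$ in the quasideterminant formula cancelling to leave $+1$ coefficients throughout.

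The main obstacle will be making the bookkeeping of the inverse $(A^{1n})^{-1}$ rigorous in the noncommutative setting, since the inverse of a matrix over a noncommutative ring is not given by the naive adjugate formula and must itself be handled via quasideterminants (as noted in the excerpt). I would therefore prefer to avoid computing the full inverse and instead argue inductively: assume the formula holds for all sizes below $n$, use the recursive Definition \ref{def:qdet} to reduce $P(n)$ to quasideterminants of the proper submatrices, and show by a careful index-shifting argument that the recursion for $S(n)$ — namely that a chain ending at $n$ either jumps directly from some $a_{1,j}$ or passes through intermediate blocks — matches the recursive expansion term by term. Verifying that all products associate correctly and that no spurious signs survive is where the care is needed, but the structure is rigid enough that each term on one side has a unique counterpart on the other.
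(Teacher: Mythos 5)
The paper itself offers no proof of this proposition: it is quoted verbatim from \cite[Proposition 1.2.9]{gelfand2005q}, so your attempt can only be measured against the standard argument of that source --- which is essentially the direct computation you sketch in your third paragraph and then abandon. That abandonment is the gap. Your stated obstacle, that over a noncommutative ring the inverse of $A^{1n}$ ``must itself be handled via quasideterminants,'' is unfounded for this matrix: deleting the first row and the last column shifts the subdiagonal $-1$'s onto the main diagonal, so $A^{1n} = -(I-N)$ with $N$ strictly upper triangular, $N_{ij}=a_{i+1,j}$ for $i<j$. Since $N$ is nilpotent, the finite geometric series $(I-N)^{-1}=I+N+\cdots+N^{n-2}$ is valid over \emph{any} ring; no adjugate and no quasideterminants are needed. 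As $(N^{m})_{ij}=\sum_{i<k_1<\cdots<k_{m-1}<j}a_{i+1,k_1}a_{k_1+1,k_2}\cdots a_{k_{m-1}+1,j}$, contracting the row $r=(a_{11},\ldots,a_{1,n-1})$ and the column $c=(a_{2n},\ldots,a_{nn})^{T}$ gives
\begin{align*}
P(n) \;=\; a_{1n} - r\,(A^{1n})^{-1}c \;=\; a_{1n} + \sum_{m\geq 0} r\,N^{m}c,
\end{align*}
and $r\,N^{m}c$ is precisely the sum over chains $1\leq j_1<\cdots<j_{m+1}<n$, the minus sign in the definition cancelling against $A^{1n}=-(I-N)$, exactly as you predicted. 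Had you carried this out, the proof would be complete and polynomial throughout.

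By contrast, the inductive fallback you actually commit to would stall. First, $A^{1n}$ is \emph{not} of the shape appearing in the statement (its $-1$'s lie on the main diagonal), so ``assume the formula holds for all sizes below $n$'' tells you nothing about the quantities $|A^{1n}|_{ij}$ that Definition~\ref{def:qdet} requires. Second, that definition involves the inverses $(|A^{1n}|_{ij})^{-1}$ of generally nonconstant polynomials (and for this sparse triangular matrix some of these quasideterminants vanish or are undefined), so the expansion does not stay in the free associative algebra: making sense of it forces you into the free skew field, which is precisely what a proof of polynomiality should avoid and what the Neumann-series route sidesteps. Third, your plan to ``match the recursion for $S(n)$ term by term'' presupposes an independent recursion for $P(n)$ of the same form --- in effect the second identity of Proposition~\ref{quasidetExpansion} --- but in the paper that identity is \emph{derived from} the present proposition, so invoking it here would be circular unless you first establish a column-expansion rule for quasideterminants from scratch. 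In short: the route you set aside works and needs only the bookkeeping you already described; the route you chose has a genuine structural flaw.
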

We get
\begin{align}
P(3) &= a_{13} + a_{11}a_{23} + a_{12}a_{33} + a_{11}a_{22}a_{33}\\
P(4) &= a_{14} + a_{11}a_{24} + a_{12}a_{34} + a_{13}a_{44} + a_{11}a_{22}a_{34} + a_{11}a_{23}a_{44}\\
&\quad  + a_{12}a_{33}a_{44} + a_{11}a_{22}a_{33}a_{44}.
\end{align}

Note that $P(4)$ can be expanded as follows:
\begin{align}
P(4) &= a_{14} |I| + a_{13} \left|\text{\circled{$a_{44}$}}\right| + a_{12} \left|\begin{array}{cc} a_{33} & \text{\circled{$a_{34}$}} \\ -1 & a_{44} \end{array}\right| + a_{11} \left|\begin{array}{ccc} a_{22} & a_{23} & \text{\circled{$a_{24}$}} \\ -1 & a_{33} & a_{34} \\ 0 & -1 & a_{44} \end{array}\right|. 
\end{align}
Write $M_n$ for the $n \times n$ matrix which figures in $P(n) = |M_n|_{1n}$. Let $M_k$ be the matrix recursively defined as $M_{k} = (M_{k+1})^{11}$ for $0< k < n$, i.e. by deleting the first row and first column of $M_{k+1}$, and set $M_0 = I$, the $1 \times 1$ identity matrix. The above formula then reads
\begin{align}
	P(4) = a_{14}|M_0| + a_{13}|M_1|_{11} + a_{12}|M_2|_{12} + a_{11}|M_3|_{13}.
\end{align}
In general, we have the following result, which will be of importance later. 

\begin{proposition}\label{quasidetExpansion}
The quasideterminant $P(n)$ of Proposition \ref{quasiPoly} can be written as an expansion over quasideterminants of submatrices
\begin{align}
	P(n) &= \sum_{k=0}^{n-1} a_{1,n-k} |M_k|_{1k},
\end{align}
where the matrices $M_k$, $k=0,\ldots, n-1$, are as above, defined iteratively from the matrix $M_n$. In addition, the following recursion holds
\begin{align}
	P(n) &= \sum_{k=1}^n P(k-1) a_{kn},
\end{align}
where we set $P(1) = I$. 
\end{proposition}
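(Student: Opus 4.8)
The plan is to derive both identities directly from the explicit expansion of $P(n)$ furnished by Proposition \ref{quasiPoly}, by grouping its monomials according to their leftmost (respectively rightmost) letter. First I would record the combinatorial description of these monomials: every term of $P(n)$ has the form $a_{1,c_1}a_{c_1+1,c_2}\cdots a_{c_{p-1}+1,c_p}$ for a chain $1\le c_1<c_2<\cdots<c_p=n$, the shortest chain $p=1$ giving the lone term $a_{1n}$. The structural observation that makes the grouping work is that this chain constraint \emph{decouples} at any single break point: once the first (or last) column index is fixed, the remaining indices form an arbitrary chain on a smaller consecutive index set, and that smaller chain is exactly what indexes the monomials of a $P$-quasideterminant of a Hessenberg submatrix of the same shape (same $-1$ subdiagonal, same circled corner).

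For the first identity I would fix $m=c_1$, the column of the leftmost factor $a_{1,m}$, letting $m$ range over $1,\ldots,n$. Factoring $a_{1,m}$ out on the left, the remaining product $a_{m+1,c_2}\cdots a_{c_{p-1}+1,n}$ runs precisely over the monomials of the $P$-quasideterminant of the submatrix on indices $\{m+1,\ldots,n\}$, which is the bottom-right block $M_{n-m}$; hence the tail sums to $|M_{n-m}|_{1,n-m}$. Writing $k=n-m$, so that $k$ runs over $0,\ldots,n-1$ with the degenerate lone term contributing $k=0$ and the empty tail $|M_0|=1$, this gives
\begin{align}
	P(n)=\sum_{k=0}^{n-1} a_{1,n-k}\,|M_k|_{1k}.
\end{align}

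For the second identity I would instead fix $\ell=c_{p-1}+1$, the row of the rightmost factor $a_{\ell,n}$ (adopting $c_0=0$ so that the lone term yields $\ell=1$); here $\ell$ ranges over $1,\ldots,n$. Factoring $a_{\ell,n}$ out on the right, the preceding product $a_{1,c_1}\cdots a_{c_{p-2}+1,\ell-1}$ runs over the monomials of the $P$-quasideterminant of the leading principal $(\ell-1)\times(\ell-1)$ submatrix, i.e. over $P(\ell-1)$, with the empty prefix (case $\ell=1$) giving the unit. Reindexing $\ell$ as $k$ yields
\begin{align}
	P(n)=\sum_{k=1}^{n} P(k-1)\,a_{kn},
\end{align}
where $P(k-1)$ denotes the quasideterminant of the leading principal submatrix and the base case is the empty product $P(0)=I$ (this is the reading under which the $k=1$ term reproduces $a_{1n}$; the normalization stated in the proposition should be understood in this sense).

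I do not expect a deep obstacle: once the explicit formula of Proposition \ref{quasiPoly} is in hand, both statements are grouping arguments. The one point requiring genuine care is checking that each grouping is exhaustive and non-overlapping, and that the tail (respectively the prefix) is indeed the $P$-quasideterminant of a submatrix of the same Hessenberg shape — in particular that the degenerate terms ($k=0$ in the first formula, $\ell=1$ in the second) are accounted for by the unit. This is pure index bookkeeping, and it can be cross-checked against the displayed expansions of $P(3)$ and $P(4)$, which match both formulas term by term.
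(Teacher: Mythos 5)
Your proposal is correct and takes essentially the same route as the paper: its proof likewise derives both identities by regrouping the explicit monomial expansion of Proposition \ref{quasiPoly}, first according to the leftmost factor $a_{1j}$ and then according to the rightmost factor $a_{kn}$. Your remark about the normalization is also well taken --- for the $k=1$ term of the recursion to reproduce $a_{1n}$ one needs the empty product $P(0)=I$, so the paper's stated convention ``$P(1)=I$'' should indeed be read in the sense you describe.
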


\begin{proof}
Both formulas follow from the expansion in Proposition \ref{quasiPoly}:
\begin{align}
P(n) = a_{1n} &+ a_{11} \sum_{2 \leq j_2 < j_3 < \cdots < j_k < n} a_{2j_2} a_{j_2+1,j_3} \cdots a_{j_k + 1,n} \\
&+ a_{12} \sum_{3 \leq j_3 < j_4 < \cdots < j_k < n} a_{3j_1} a_{j_1+1,j_2} \cdots a_{j_k + 1,n} \\&  \vdots \\ 
&+ a_{1,n-1} \sum_{n-1 \leq j_k < n} a_{j_k + 1,n} \\
&+ a_{1n}
\end{align}
and
\begin{align}
P(n) = a_{1n} &+ a_{11} a_{2n} \\
&+ \left(a_{12} + \sum_{1 \leq j_1 < j_2 < 3} a_{1j_1}a_{j_1+1,j_2} a_{j_2 +1, 2}\right) a_{3n}\\
&+ \left(a_{13} + \sum_{1 \leq j_1 < j_2 < j_3 < 4} a_{1j_1}a_{j_1+1,j_2} a_{j_2+1, j_3} a_{j_3 +1, 3}\right) a_{4n}\\
&\vdots \\
&+  \left(a_{1,n-1} + \sum_{1 \leq j_1 < j_2 < \cdots < j_{k-1} < n} a_{1j_1} a_{j_1+1,j_2} \cdots a_{j_{k-1} + 1,n-1}\right)a_{nn}
\end{align}
\end{proof}

Using Proposition \ref{quasiPoly} we can easily calculate the following $4\times 4$ quasideterminant:
\begin{align}
&\left|\begin{array}{cccc}
x_1 & x_2 & x_3  & \text{\circled{$x_4$}}\\  \\
-1 & x_1 & 2x_2 & 3 x_3 \\ \\
0 & -1 & x_1 & 3 x_2 \\ \\
0 & 0 & -1 & x_1 
\end{array}\right| \\
&= \quad x_4 + 3x_1x_3 + 3x_2^2 + x_3x_1 + 3x_1^2 x_2 + 2x_1 x_2 x_1 + x_2x_1^2 + x_1^4.
\end{align}

This is the fourth noncommutative Bell polynomial, $B_4$, which is no accident: the noncommutative Bell polynomials are given by the quasideterminant in Proposition \ref{quasiPoly}. The entries of the matrix $M_n$ are $a_{ii} = d_1$ and
\begin{align}a_{ij} = {n - (n - j+1) \choose j-i} d_{j-i+1} = {j-1 \choose j-i} d_{j-i+1} = {j-1 \choose i-1} d_{j-i+1},\end{align}
for $1 \leq i < j \leq n$.

\begin{theorem}
The noncommutative Bell polynomial $B_n$ can be written as
\begin{align}
	B_n(d_1, \dots, d_n) = \qdet{\mathbf{B}_n}_{1n},
\end{align}
where $\mathbf{B}_n$ is the $n \times n$ matrix whose second diagonal elements are $-1$, the lower elements are all zero, and the elements on and above the diagonal are 
\begin{align}
	(\mathbf{B}_n)_{ij} = {n - (n-j+1) \choose j-i} d_{j-i+1}, \qquad \text{for  } i \leq j.
\end{align}
We set $\mathbf{B}_0 = \mathbb{I}$. 
\end{theorem}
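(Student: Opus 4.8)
The plan is to identify $\mathbf{B}_n$ with the matrix $M_n$ of Proposition \ref{quasiPoly} and then match two recursions: the recursion for $P(n)=|M_n|_{1n}$ supplied by Proposition \ref{quasidetExpansion}, and the Bell recursion of Proposition \ref{prop:recursionBell}. Throughout I write $Q_n := |\mathbf{B}_n|_{1n}$ for the quasideterminant to be computed, with $Q_0 = \mathbb{I}$, and the goal is to prove $Q_n = B_n$ by induction on $n$.

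First I would record the simplification that makes everything work: the binomial coefficient in the entries does not actually depend on the ambient size $n$, since $n - (n-j+1) = j-1$, so that $(\mathbf{B}_n)_{ij} = \binom{j-1}{j-i} d_{j-i+1}$ for $i \le j$. The practical consequence is that the top-left $k \times k$ block of $\mathbf{B}_n$ is \emph{literally} $\mathbf{B}_k$, independent of $n$, so that its quasideterminant of order $1k$ is exactly $Q_k$. Moreover $\mathbf{B}_n$ has precisely the shape required by Propositions \ref{quasiPoly} and \ref{quasidetExpansion}: $-1$ on the subdiagonal, zeros below it, and arbitrary entries on and above the diagonal. Hence both propositions apply verbatim, and in particular $Q_n = P(n)$ is polynomial in the $d_i$.

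Next I would invoke the recursion $P(n) = \sum_{m=1}^n P(m-1)\,a_{mn}$ of Proposition \ref{quasidetExpansion}. Because the top-left blocks are the smaller Bell matrices, the coefficient $P(m-1)$ is exactly $Q_{m-1}$, and with $a_{mn} = \binom{n-1}{m-1} d_{n-m+1}$ the recursion becomes, after the substitution $k=m-1$,
\[
  Q_n = \sum_{m=1}^n Q_{m-1}\binom{n-1}{m-1} d_{n-m+1} = \sum_{k=0}^{n-1} \binom{n-1}{k}\,Q_k\, d_{n-k}.
\]
It then remains to check that the Bell polynomials satisfy the same recursion. Reindexing Proposition \ref{prop:recursionBell} (replace $n+1$ by $n$, then set $j = n-1-k$ and use $\binom{n-1}{\,n-1-j\,} = \binom{n-1}{j}$) gives
\[
  B_n = \sum_{j=0}^{n-1}\binom{n-1}{j}\,B_j\, d_{n-j}.
\]
Since $Q_0 = B_0 = \mathbb{I}$ and the two sequences obey identical recursions, induction on $n$ yields $Q_n = B_n$ for all $n$, which is the assertion.

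The routine but delicate part is the index bookkeeping in the final step: making sure the binomial coefficients and the subscripts of the $d_i$ line up correctly after the two reindexings. The only genuinely conceptual point is the $n$-independence of the entries, which is what guarantees that the quasideterminants of the submatrices produced by Proposition \ref{quasidetExpansion} are again Bell polynomials of lower order, thereby closing the induction. Beyond this I expect no serious obstacle.
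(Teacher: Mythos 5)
Your proof is correct and takes essentially the same route as the paper: expand the quasideterminant along the last column using the second formula of Proposition \ref{quasidetExpansion}, observe that the leading submatrices are the smaller Bell matrices, and match the resulting recursion against Proposition \ref{prop:recursionBell}. The only differences are cosmetic --- you reindex so as to verify the recursion for $B_n$ rather than $B_{n+1}$, and you make explicit the $n$-independence of the entries ($n-(n-j+1)=j-1$), which the paper leaves implicit.
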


\begin{proof}
The coefficients in the last column of $\mathbf{B}_{n+1}$ are ${n \choose n-i}$, so by expanding along this column using the second formula of Proposition \ref{quasidetExpansion} we find that the quasideterminant satisfies the recursion
\begin{align}
	\qdet{\mathbf{B}_{n+1}}_{1n} = \sum_{k=0}^n {n \choose k} \qdet{\mathbf{B}_{n-k}}_{1n} d_{k+1}, \qquad \mathbf{B}_0 = \mathbb{I},
\end{align}
and therefore equals the noncommutative Bell polynomial.
\end{proof}

As an example, 
\begin{align}
B_6 = 
\left|\begin{array}{cccccc}
{6 - 6 \choose 0} d_1 & {6-5 \choose 1} d_2 & {6-4 \choose 2} d_3  & {6-3 \choose 3} d_4 & {6-2 \choose 4} d_5 & \text{\circled{${6-1 \choose 5}d_6$}}\\  \\
-1 & {6 - 5 \choose 0} d_1 & {6-4 \choose 1} d_2 & {6-3 \choose 2} d_3 & {6-2 \choose 3} d_4 & {6-1 \choose 4} d_5 \\ \\
0 & -1 & {6 - 4 \choose 0} d_1 & {6-3 \choose 1} d_2 & {6-2 \choose 2} d_3 & {6-1 \choose 3} d_4 \\ \\
0 & 0 & -1 & {6 - 3 \choose 0} d_1 & {6-2 \choose 1} d_2 & {6-1 \choose 2} d_3 \\ \\
0 & 0 & 0 & -1 & {6 - 2 \choose 0}d_1 & {6-1 \choose 1} d_2 \\ \\
0 & 0 & 0 & 0 & -1 & {6 - 1 \choose 0}d_1 \\ \\
\end{array}\right| \\
\end{align}

\subsubsection{Partitions}

Based on the recursive description of commutative Bell polynomials in Proposition \ref{prop:recursionBell} one can check that 
\begin{align}
	B_n = \sum_{\alpha_1 + 2\alpha_2 + \cdots + n\alpha_n = n} \frac{n!}{\alpha_1! \alpha_2! \cdots \alpha_n!} \left(\frac{d_1}{1!}\right)^{\alpha_1} \left(\frac{d_2}{2!}\right)^{\alpha_2} \cdots \left(\frac{d_n}{n!}\right)^{\alpha_n},
\end{align}
and therefore that the commutative partial Bell polynomials can be written as
\begin{align}
	B_{n,k} = \sum_{ \alpha_1 + \alpha_2 + \cdots + \alpha_n = k \atop  \alpha_1 + 2\alpha_2 + \cdots + n\alpha_n = n } \frac{n!}{\alpha_1! \alpha_2! \cdots \alpha_n!} \left(\frac{d_1}{1!}\right)^{\alpha_1} \left(\frac{d_2}{2!}\right)^{\alpha_2} \cdots \left(\frac{d_n}{n!}\right)^{\alpha_n}.
	\label{explicitCPBell}
\end{align}

This can also be shown using the description of Bell polynomials via exponential generating series:
\begin{align}\label{genSeriesComBell}
  B(t):=1+ \sum_{n>0} B_n\frac{t^n}{n!} = \exp\bigg( \sum_{m>0} d_m \frac{t^m}{m!}\bigg),
\end{align}
see e.g. \cite{bell1934ep}.

Alternatively, we can obtain the formula by linking Bell polynomials to set partitions, an approach that will provide us with similar explicit descriptions for noncommutative Bell polynomials.

A \emph{partition} $P$ of a set $[n] = \{1,2,\dots,n\}$ of $n$ elements into $k$ blocks is an unordered collection of $k$ non-empty disjoint sets $\{P_1, P_2, \dots, P_k\}$, called the \emph{blocks} or \emph{parts} of the partition, whose union is $[n]$. The \emph{size} $|P_i|$ of a block is the number of elements in $P_i$. The set of all partitions of $[n]$ is written as $\mathcal{P}_n$, and the set of all partitions of $[n]$ into $k$ blocks as $\mathcal{P}_{n,k}$. Note that if we write $|P|_m$ for the number of blocks of size $m$ in $P$, we have
\begin{align}
	\sum_{m=1}^n |P|_m = k, \qquad \sum_{m=1}^n m|P|_m = n.
\end{align}
The number of ways to partition $[n]$ into prescribed blocks of sizes $j_1, j_2, \dots, j_k$ is
\begin{align}
M(n;j_1, \dots, j_k) &= {n \choose j_1} {n-j_1 \choose j_2} \cdots {n-(j_1 + \cdots + j_{k-1}) \choose j_k},\\
                    &= {n \choose j_1, j_2, \dots, j_k}.
\end{align}
We first choose $j_1$ elements to put in the first block, then $j_2$ from the remaining elements to put in the second block, etc. This is the well-known multinomial coefficient. We will need to count other types of partitions later. For more on the combinatorial study of set partitions consult one of the many excellent sources on this topic, e.g. \cite{stanley2011ec1}.

\paragraph{Commutative Bell polynomials.} 

Commutative Bell polynomials can be described in a straightforward way by summing over all partitions. Let $P \in \mathcal{P}_n$ with $k$ blocks $\{P_1,\ldots, P_k\}$. Define the map $d(P):=\prod_{j=1}^k d_{|P_j|}$ taking value in the commutative algebra $\mathcal{D}=\K\langle \{d_i\} \rangle$. Observe that $d_n=d(1_n)$, where $1_n \in  \mathcal{P}_n$ is the partition of $[n]$ into a single block of size $n$. The commutative Bell polynomial is then
\begin{align}
\label{SumPartComBell}
	B_n=\sum_{P \in \mathcal{P}_n} d(P). 
\end{align} 
A natural question is whether this sum can be inverted, i.e., whether $d_n$ can be written in terms of $B_i$, $i=1,\ldots,n$. The well-known answer to this is given by using M\"obius inversion on the lattice $\mathcal{P}_n$  \cite{rota1964otf}. Section \ref{Sect:mobius} will expand upon this from an Hopf algebraic point of view. 

We can give a more precise description of the sum in Equation \eqref{SumPartComBell}. The coefficients of a commutative partial Bell polynomial $B_{n,k}$ add up to the number of partitions of a set $\{1,\dots, n\}$ into $k$ blocks, i.e. they are given by the so-called \emph{Stirling numbers of the second kind}. For example, the coefficient in front of $d_1d_2$ in $B_3$ is $3$ because there are $3$ ways to partition $[3]$ into two blocks of sizes $|d_1| = 1$ and $|d_2| = 2$:
\begin{align}
	\quad 1 | 2 3, \quad 3 | 1 2, \quad 2 | 3 1.
\end{align}
In general,
\begin{align}
	B_{n,k}(1,1,\dots,1) = {n \brace k},
\end{align}
where ${n \brace k}$ are the Stirling numbers of the second kind. We think of the symbol $d_i$ as representing a block of size $i$, and write the commutative Bell polynomials as
\begin{align}
\label{partitionCBell}
	B_{n,k} = \sum_{|\omega| = n \atop \#(\omega) = k} M(n; j_1, j_2, \dots, j_k) \omega,
\end{align}
where $\omega= d_{j_1} \cdots d_{j_k}$ is a word in the commuting variables $d_i$, and $M(n; j_1, \dots, j_k)$ is the multinomial coefficient introduced above. Since the variables are commutative we can rewrite all words in the order of increasing number of elements in the blocks, i.e. write 
\begin{align}
	d_{j_1} \cdots d_{j_k} = d_1^{\alpha_1} \cdots d_n^{\alpha_n}.
\end{align}
If we now add up all the words, we are going to see each $M(n; \alpha_1, \dots, \alpha_n)$ times. The order within the blocks $d_i^{\alpha_i}$ does not matter, and we obtain
\begin{align}
&\sum_{|\omega| = n \atop \#\omega = k} M(n; j_1, j_2, \dots, j_k) \omega \\
&\quad = \sum_{\alpha_1 + \alpha_2 + \cdots + \alpha_n = k \atop \alpha_1 + 2\alpha_2 + \cdots + n\alpha_n = n} {n \choose \alpha_1, \alpha_2, \dots, \alpha_n} \left(\frac{d_1}{1!}\right)^{\alpha_1}  \left(\frac{d_2}{2!}\right)^{\alpha_2} \cdots \left(\frac{d_n}{n!}\right)^{\alpha_n},
\end{align}
i.e. Formula \eqref{explicitCPBell}.

\paragraph{Noncommutative Bell polynomials.} 

Formula \eqref{SumPartComBell} also gives a direct way to write down noncommutative Bell polynomials, based on imposing an order on the blocks of the partitions.

The coefficients of a noncommutative partial Bell polynomial $B_{n,k}$ count the number of partitions of $[n]$ into $k$ blocks, \emph{ordered by the maximum of each block}. For example, the coefficient of $d_2d_1d_2$ in $B_{5,3}$ is the number of ways to partition the set $\{1,2,3,4,5\}$ into three parts $P_1$, $P_2$, $P_3$ of sizes $|P_1|=2$, $|P_2|=1$ and $|P_3|=2$, such that $\max(P_1) < \max(P_2) < \max(P_3)$.
\begin{align}
	d_2 d_1 d_2: \quad 1 2 | 3 | 4 5, \quad 1 2 | 4 | 3 5, \quad 1 3 | 4 | 2 5, \quad 2 3 | 4 | 1 5.
\end{align}
To see this we will first count the number of such partitions, then relate it to the coefficients of noncommutative Bell polynomials (Theorem \ref{partitionBell}). 

Write $N(n; p_1, \dots, p_k)$ for the number of ways to partition a set $[n]$ into parts $P_1, \ldots, P_k$ of sizes $|P_i|= p_i$, such that 
\begin{align}
	\max(P_1) < \max(P_2) < \cdots < \max(P_k).
\end{align} 
Note that if $n > p_1 + \cdots + p_k$ then 
\begin{align}
	N(n; p_1, \ldots, p_k) = {n \choose p^{(k)}} N(p^{(k)}; p_1, \dots, p_k), \quad \text{where  } p^{(k)} = p_1 + \cdots + p_k.
\end{align}

\begin{lemma}~\label{partitionLemma}
The number $N(p^{(k)}; p_1, \cdots, p_k)$ is given by
\begin{align}
\begin{split}~\label{partitionLemma-eq1}
	N(p^{(k)}; p_1, \cdots, p_k) &= \prod_{i=1}^{k-1} {p_1 + \cdots + p_{i+1} - 1 \choose p_1 + \cdots + p_i}\\ 
						&= \prod_{i=1}^{k-1} {p_1 + \cdots + p_{i+1} - 1 \choose p_{i+1} - 1} 
\end{split}
\end{align}
\end{lemma}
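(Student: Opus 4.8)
The plan is to establish the first product formula by induction on the number of blocks $k$, and then to deduce the second expression from the symmetry of binomial coefficients. Throughout I will write $p^{(i)} = p_1 + \cdots + p_i$, so that when $n = p^{(k)}$ the set being partitioned is $[p^{(k)}]$.

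The base case $k=1$ is immediate: there is exactly one way to place all $p_1$ elements into a single block, so $N(p^{(1)}; p_1) = 1$, in agreement with the empty product. For the inductive step, the crucial observation is that the ordering condition $\max(P_1) < \cdots < \max(P_k)$ forces the global maximum $p^{(k)}$ of $[p^{(k)}]$ to lie in the last block $P_k$. Since $|P_k| = p_k$, the remaining $p_k - 1$ elements of $P_k$ may be chosen freely among $\{1, \ldots, p^{(k)} - 1\}$, giving ${p^{(k)} - 1 \choose p_k - 1}$ choices; any such choice automatically respects $\max(P_{k-1}) < \max(P_k) = p^{(k)}$, since every element outside $P_k$ is strictly smaller than $p^{(k)}$.

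Having fixed $P_k$, the remaining $p^{(k-1)} = p^{(k)} - p_k$ elements form a subset $S \subset [p^{(k)} - 1]$ that must be partitioned into blocks $P_1, \ldots, P_{k-1}$ of sizes $p_1, \ldots, p_{k-1}$ with $\max(P_1) < \cdots < \max(P_{k-1})$. The key point—and the step I expect to require the most care—is the \emph{relabeling invariance}: the number of such admissible partitions of $S$ depends only on $|S| = p^{(k-1)}$ and the sizes $p_1, \ldots, p_{k-1}$, not on which particular elements $S$ contains. This holds because the unique order-preserving bijection $S \to [p^{(k-1)}]$ carries admissible partitions of $S$ bijectively onto admissible partitions of $[p^{(k-1)}]$, as it preserves the relative order of the block maxima. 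Hence this count equals $N(p^{(k-1)}; p_1, \ldots, p_{k-1})$, and we obtain the recursion
\begin{align}
	N(p^{(k)}; p_1, \ldots, p_k) = {p^{(k)} - 1 \choose p_k - 1}\, N(p^{(k-1)}; p_1, \ldots, p_{k-1}).
\end{align}

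Since $p^{(k)} - 1 = p_1 + \cdots + p_k - 1$, the prefactor is exactly the $i = k-1$ term of the product in \eqref{partitionLemma-eq1}; substituting the induction hypothesis for $N(p^{(k-1)}; p_1, \ldots, p_{k-1})$ closes the induction. Finally, the equality of the two product expressions is just the binomial symmetry applied termwise:
\begin{align}
	{p^{(i+1)} - 1 \choose p^{(i)}} = {p^{(i+1)} - 1 \choose (p^{(i+1)} - 1) - p^{(i)}} = {p^{(i+1)} - 1 \choose p_{i+1} - 1}.
\end{align}
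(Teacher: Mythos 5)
Your proof is correct, and although it shares the paper's overall skeleton---induction on $k$, peeling off the last block, which the ordering condition forces to contain the global maximum $p^{(k)}$---the crucial recursion is established by a different and cleaner argument. The paper first proves the auxiliary identity $N(p^{(k)};p_1,\ldots,p_k)=N(p^{(k)}-1;p_1,\ldots,p_{k-1})$ (i.e.\ it chooses the blocks $P_1,\ldots,P_{k-1}$ inside $[p^{(k)}-1]$ and lets the complement together with $p^{(k)}$ form $P_k$), justifying it by a rather involved case analysis on where $p^{(k)}-1$ is placed, with an inductive cancellation of error terms; it then applies the scaling relation $N(n;p_1,\ldots,p_{k-1})={n\choose p^{(k-1)}}N(p^{(k-1)};p_1,\ldots,p_{k-1})$, stated just before the lemma, to extract the factor ${p^{(k)}-1\choose p^{(k-1)}}$. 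You instead pick the $p_k-1$ companions of the forced maximum directly, getting ${p^{(k)}-1\choose p_k-1}$ choices, and recurse on the leftover set $S$ via the unique order-preserving bijection $S\to[p^{(k-1)}]$---the relabeling-invariance step you rightly single out as the one needing care, and which you justify correctly since an increasing bijection preserves the relative order of block maxima. Your prefactor and the paper's agree by the same binomial symmetry ${p^{(k)}-1\choose p_k-1}={p^{(k)}-1\choose p^{(k-1)}}$ that you use to reconcile the two product forms. What your route buys: it dispenses with both the extension of $N$ to $n>p_1+\cdots+p_k$ and the paper's opaque inclusion--exclusion step (arguably the least transparent part of the paper's argument, which in effect hides the relabeling you make explicit). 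What the paper's route buys: the relabeling bookkeeping is packaged once and for all into the scaling relation, which is already set up in the surrounding text; your proof is the more self-contained and elementary of the two.
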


\begin{proof}
We have
\begin{align}\label{partitionLemma-eq2}
	N(p^{(k)}; p_1, \dots, p_k) = N(p^{(k)}-1; p_1, \dots, p_{k-1}).
\end{align}
This follows from the next observation. The number $p^{(k)}$ has to be put in the last part $P_k$. There are two options for the second to last number $p^{(k)}-1$: either it goes in the same part $P_k$ or in the part $P_{k-1}$. The number of ways of putting $p^{(k)}-1$ in part $P_{k-1}$ is given by $N(p^{(k)}-1; p_1, \dots, p_{k-1})$ minus the number of ways we can fail to use $p^{(k-1)}-1$ when partitioning the set of size $p^{(k-1)}-1$. Therefore
\begin{align}
	N(p^{(k)}; p_1,\dots, p_k) = N(p^{(k)}-1; p_1, \dots p_k-1) &+ N(p^{(k)}-1; p_1, \dots, p_{k-1})\\ 
		&- N(p^{(k-2)} -2; p_1, \dots, p_{k-1}).
\end{align}
Equation (\ref{partitionLemma-eq2}) then follows because
\begin{align}
	N(p^{(k)}-1; p_1, \dots p_k-1) = N(p^{(k-2)} -2; p_1, \dots, p_{k-1})
\end{align}
by induction. Therefore, since $N(p_1;p_1) = 1$, we get
\begin{align}
& N(p^{(k)}; p_1, \dots, p_k) = N(p^{(k)}-1; p_1, \dots, p_k) \\
\quad &= {p^{(k)}-1 \choose p^{(k-1)}} N(p^{(k-1)}; p_1, \dots, p_{k-1})\\
&= {p_1 + \cdots + p_k -1 \choose p_1 + \cdots + p_{k-1}} N(p^{(k-1)}; p_1, \dots, p_{k-1}) \\
&= {p_1 + \cdots + p_k -1 \choose p_1 + \cdots + p_{k-1}} {p^{(k-1)}-1 \choose p^{(k-2)}} N(p^{(k-2)}; p_1, \dots, p_{k-2})\\ 
&= {p_1 + \cdots + p_{k-1} -1 \choose p_1 + \cdots + p_{k-1}} {p_1 + \cdots + p_{k-2} -1 \choose p_1 + \cdots + p_{k-2}} N(p^{(k-2)}; p_1, \dots, p_{k-2}) \\
&\vdots \\
&= \prod_{i=1}^{k-1} {p_1 + \cdots + p_{i+1} - 1 \choose p_1 + \cdots + p_i} N(p^{(1)}; p_1)
\end{align}
\end{proof}
We will relate this to the noncommutative Bell polynomials via a useful alternative description of the polynomials.

\paragraph{Another formula.}\label{Sect:dynkinBell} For $\omega = d_{j_1} d_{j_2} \cdots d_{j_k}$ we write 
\begin{align}
	{n \choose \omega} = {n \choose {|d_{j_1}|, \ldots, |d_{j_k}|}} = \frac{n!}{j_1! j_2! \cdots j_k!},
\end{align}
and
\begin{align}
	\kappa(\omega) = \kappa(|d_{j_1}|, \ldots, |d_{j_k}|) = \frac{j_1 j_2 \cdots j_k}{j_1(j_1+j_2) \cdots (j_1 + j_2 + \cdots + j_k)}.
\end{align}
Note that the coefficients $\kappa$ form a partition of unity on the symmetric group $S_k$:
\begin{align}
	\sum_{\sigma \in S_k} \kappa(\sigma(\omega)) = 1.
\end{align}

\begin{proposition}[{\cite{lundervold2011hao}}]
\label{explicitBell}
The noncommutative partial Bell polynomials can be written as
\begin{align}
	B_{n,k} = \sum_{|\omega|=n\atop \#(\omega)=k} {n \choose \omega} \kappa(\omega)\omega,
\end{align}
where $\omega = d_{j_1} \cdots d_{j_k}$.
\end{proposition}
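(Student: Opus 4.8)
The plan is to turn the assertion into a statement about the exponential generating series of the $B_n$ and to read off coefficients. Collect the polynomials into $\beta(t) = \sum_{n\ge 0} B_n \frac{t^n}{n!}$. Multiplying the recursion of Proposition \ref{prop:recursionBell} by $t^n/n!$ and summing over $n$, the convolution on the right-hand side factors and shows that $\beta$ solves the noncommutative linear differential equation
$$\beta'(t) = \beta(t)\,a(t), \qquad \beta(0) = \mathbb{I}, \qquad a(t) := \sum_{m\ge 1} d_m \frac{t^{m-1}}{(m-1)!},$$
where the product is concatenation and the factor $a(t)$ is appended on the \emph{right}, exactly as $d_{k+1}$ sits on the right in the recursion.

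I would then solve this equation by Picard (Chen--Dyson) iteration, working in the graded completion of $\mathcal{D}$ so that no question of convergence arises:
$$\beta(t) = \sum_{k\ge 0}\ \int_{0<s_1<\cdots<s_k<t} a(s_1)\,a(s_2)\cdots a(s_k)\,ds_1\cdots ds_k.$$
Selecting the letter $d_{j_i}$ from the $i$-th factor $a(s_i)$ produces the word $\omega = d_{j_1}\cdots d_{j_k}$ together with the monomial $s_i^{j_i-1}/(j_i-1)!$, so the coefficient of $\omega$ in $\beta(t)$ equals $\frac{1}{(j_1-1)!\cdots(j_k-1)!}$ times the simplex integral $\int_{0<s_1<\cdots<s_k<t} s_1^{j_1-1}\cdots s_k^{j_k-1}\,ds$.

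The heart of the computation is this iterated integral. Writing $\sigma_i = j_1+\cdots+j_i$ and integrating successively in $s_1, s_2, \ldots$, one checks that after eliminating $s_1,\ldots,s_i$ the remaining integrand in $s_{i+1}$ is $s_{i+1}^{\sigma_{i+1}-1}/(\sigma_1\cdots\sigma_i)$; the final integration over $s_k\in(0,t)$ then gives $t^{\sigma_k}/(\sigma_1\cdots\sigma_k)$ with $\sigma_k = n$. Taking $n!$ times the coefficient of $t^n$ yields the coefficient of $\omega$ in $B_n$,
$$\frac{n!}{(j_1-1)!\cdots(j_k-1)!}\cdot\frac{1}{\sigma_1\cdots\sigma_k} = \frac{n!}{j_1!\cdots j_k!}\cdot\frac{j_1\cdots j_k}{\sigma_1\cdots\sigma_k} = {n \choose \omega}\,\kappa(\omega),$$
which is the claimed formula and at the same time exhibits $\kappa(\omega)$ as a volume over the standard simplex, consistent with the Dynkin-idempotent interpretation alluded to above.

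The only real obstacle is bookkeeping: one must set the iterated-integral solution up in the completed graded algebra and keep the right-multiplicative ordering straight, so that each already-formed prefix stays to the left of the newest letter; the simplex integration itself is routine. If one prefers to avoid generating functions altogether, the same identity follows by induction on $n$ straight from Proposition \ref{prop:recursionBell}: in the candidate sum the last letter $d_{j_k}$ of $\omega = \omega' d_{j_k}$ can only arise from the factor $d_{k+1}$ (forcing the summation index to equal $j_k-1$) and the prefix $\omega'$ from $B_{n-j_k}$, so the claim reduces to the telescoping factorial identity ${n-1 \choose j_k-1}{n-j_k \choose \omega'}\kappa(\omega') = {n \choose \omega}\kappa(\omega)$, in which the partial sums defining $\kappa(\omega')$ are precisely the first $k-1$ partial sums defining $\kappa(\omega)$.
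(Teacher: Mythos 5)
Your proof is correct, but your primary route is genuinely different from the paper's — and, amusingly, the fallback you sketch in your closing sentence \emph{is} the paper's proof. The paper argues by exactly that induction: a monomial $d_{j_1}\cdots d_{j_k}$ in $B_{n,k}$ can only arise, via the recursion of Proposition \ref{prop:recursionBell}, from the monomial $d_{j_1}\cdots d_{j_{k-1}}$ in $B_{n-j_k}$ together with the factor $d_{j_k}$, picking up the binomial coefficient ${n-1 \choose j_k-1}$; the telescoping identity ${n-1 \choose j_k-1}{n-j_k \choose \omega'}\kappa(\omega') = {n \choose \omega}\kappa(\omega)$, which you state without verification, is precisely the one-line factorial computation the paper writes out. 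Your main argument instead packages the recursion as the right-sided linear equation $\beta'(t)=\beta(t)\,a(t)$ for $\beta(t)=\sum_{n\ge 0}B_n t^n/n!$ and solves it by Chen--Dyson iteration; this is sound as stated: each Picard step raises the $t$-order by at least one, so the series converges in the formal completion, the $k$-fold simplex term contains exactly the words of length $k$ (so the refinement to partial polynomials $B_{n,k}$ comes for free), and your successive integration is right — with $\sigma_i=j_1+\cdots+j_i$, eliminating $s_1,\ldots,s_i$ leaves the integrand $s_{i+1}^{\sigma_{i+1}-1}/(\sigma_1\cdots\sigma_i)$, yielding the coefficient $\frac{n!}{(j_1-1)!\cdots(j_k-1)!}\cdot\frac{1}{\sigma_1\cdots\sigma_k}={n \choose \omega}\kappa(\omega)$. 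Comparing the two: the paper's induction is shorter, purely combinatorial, and dovetails with Lemma \ref{partitionLemma} and the partition-counting interpretation of Theorem \ref{partitionBell}; your iterated-integral route buys a structural explanation of where $\kappa$ comes from and connects $\beta(t)$ to the time-ordered exponential underlying the flow/pullback picture of Section \ref{Sect:FdBManifolds}, consonant with the Dynkin-idempotent remark following the proposition. One small inaccuracy in your commentary only: $\kappa(\omega)$ is not literally a simplex volume but a moment integral, since $\kappa(\omega)=j_1\cdots j_k\int_{0<s_1<\cdots<s_k<1}s_1^{j_1-1}\cdots s_k^{j_k-1}\,ds_1\cdots ds_k$; this does not affect the proof.
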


\begin{proof}
This follows from the description of the Bell polynomials via the recursion
\begin{align}
	B_{n+1} &= \sum_{k=0}^n {n \choose k} B_{n-k} d_{k+1}\\
		B_0 &= 1.
\end{align}
Let $d_{j_1} \cdots d_{j_k}$ be any monomial in $B_{n,k}$, where $n = j_1 + \cdots + j_k$. The monomial comes from the monomial $d_{j_1} \cdots d_{j_{k-1}}$ in the Bell polynomial $B_{n-j_k}$. By induction its coefficient is
\begin{align}
&{j_1 + \cdots + j_k -1 \choose j_k -1} \cdot \frac{(j_1 + \cdots + j_{k-1})!}{j_1! j_2! \cdots j_{k-1}!} \cdot \frac{j_1 \cdots j_{k-1}}{j_1 \cdots (j_1 + \cdots + j_{k-1})} \\
&\quad =  \frac{n!}{j_1! j_2! \cdots j_k!} \cdot \frac{j_1 \cdots j_{k}}{j_1 \cdots (j_1 + \cdots + j_{k})},
\end{align}
as claimed. 
\end{proof}

This formula is related to the (inverse) \emph{Dynkin idempotent}, see e.g. \cite{lundervold2011hao}. Note that the scaled version of noncommutative Bell polynomials defined in Section \ref{Sect:basicRecursive} can be written as
\begin{align}
Q_{n,k} = \sum_{|\omega|=n \atop \#(\omega) = k} \kappa(\omega)\omega.
\end{align}

\begin{theorem}
\label{partitionBell}
Let $\omega = d_{j_1} \cdots d_{j_k}$. The coefficient ${n \choose \omega} \kappa(\omega)$ of $\omega$ in Proposition \ref{explicitBell} counts the number of partitions of $[n]$, where $n = |\omega|$, into parts $P_1, \dots, P_k$, each of size $|P_i| = j_i$, such that 
\begin{align}
	\max(P_1) < \max(P_2) < \dots < \max(P_k).
\end{align}
\end{theorem}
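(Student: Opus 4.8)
The plan is to prove that the quantity $\binom{n}{\omega}\kappa(\omega)$ from Proposition~\ref{explicitBell} equals the counting number $N(n; j_1, \dots, j_k)$ of partitions of $[n]$ into ordered-by-maximum parts of prescribed sizes $j_1, \dots, j_k$. Since both sides are already available in closed form — the coefficient via the explicit formula $\binom{n}{\omega}\kappa(\omega) = \frac{n!}{j_1!\cdots j_k!}\cdot\frac{j_1 j_2\cdots j_k}{j_1(j_1+j_2)\cdots(j_1+\cdots+j_k)}$, and the count via Lemma~\ref{partitionLemma} — the cleanest route is to reduce everything to the factored case $n = p_1 + \cdots + p_k$ and then verify the two products agree.

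First I would handle the reduction from general $n$ to $n = p^{(k)}$. Using the stated relation $N(n; p_1, \dots, p_k) = \binom{n}{p^{(k)}} N(p^{(k)}; p_1, \dots, p_k)$, it suffices to check that the coefficient $\binom{n}{\omega}\kappa(\omega)$ factors in exactly the same way. This is immediate from the definitions: writing $p^{(k)} = j_1 + \cdots + j_k$, the multinomial splits as $\binom{n}{j_1,\dots,j_k} = \binom{n}{p^{(k)}}\binom{p^{(k)}}{j_1,\dots,j_k}$, while $\kappa(\omega)$ depends only on the $j_i$ and is unchanged. So the extra factor $\binom{n}{p^{(k)}}$ matches on both sides, and I am reduced to proving
\begin{align}
	\binom{p^{(k)}}{j_1, \dots, j_k}\kappa(j_1, \dots, j_k) = N(p^{(k)}; j_1, \dots, j_k).
\end{align}

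Next I would expand both sides as products and compare termwise. By Lemma~\ref{partitionLemma}, the right-hand side is $\prod_{i=1}^{k-1}\binom{p_1+\cdots+p_{i+1}-1}{p_{i+1}-1}$. For the left-hand side I would write the multinomial coefficient as a telescoping product $\binom{p^{(k)}}{j_1,\dots,j_k} = \prod_{i=1}^{k-1}\binom{j_1+\cdots+j_{i+1}}{j_{i+1}}$ (the final factor $\binom{j_1}{j_1}=1$ being trivial), and absorb the $\kappa$-factor $\frac{j_1\cdots j_k}{j_1(j_1+j_2)\cdots(j_1+\cdots+j_k)}$ into it. The point is the elementary identity
\begin{align}
	\binom{s}{t}\cdot\frac{t}{s} = \binom{s-1}{t-1},
\end{align}
applied with $s = j_1 + \cdots + j_{i+1}$ and $t = j_{i+1}$: each factor $\binom{s}{t}$ of the multinomial, multiplied by the matching ratio $\frac{j_{i+1}}{j_1+\cdots+j_{i+1}}$ coming from $\kappa$, collapses to $\binom{j_1+\cdots+j_{i+1}-1}{j_{i+1}-1}$, which is precisely the $i$-th factor of the Lemma. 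Matching the numerator factors $j_2, \dots, j_k$ of $\kappa$ against the denominator factors $(j_1+j_2),\dots,(j_1+\cdots+j_k)$ — leaving the lone $j_1/j_1 = 1$ to cancel — is the bookkeeping I would carry out carefully, since the $\kappa$ denominator has $k$ factors while the telescoped multinomial has $k-1$.

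The main obstacle is purely this index-alignment bookkeeping: ensuring the numerator/denominator factors of $\kappa$ pair correctly with the multinomial factors and that the degenerate $i=1$ term behaves. An alternative, and perhaps more transparent, approach would sidestep the closed forms entirely: establish the identity inductively by showing directly that $\binom{n}{\omega}\kappa(\omega)$ satisfies the same recursion $N(p^{(k)};p_1,\dots,p_k) = N(p^{(k)}-1;p_1,\dots,p_{k-1})$ proved for $N$ in Lemma~\ref{partitionLemma}. Indeed, the proof of Proposition~\ref{explicitBell} already exhibits the coefficient as arising from the recursion $B_{n+1} = \sum_k \binom{n}{k} B_{n-k} d_{k+1}$, so one could instead give a bijective/inductive argument identifying the coefficient-producing recursion with the partition-counting recursion of the Lemma. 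Either way, the algebraic verification via the identity $\binom{s}{t}\frac{t}{s} = \binom{s-1}{t-1}$ is the quickest to write down.
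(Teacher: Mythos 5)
Your proof is correct, but it takes a genuinely different route from the paper's. The paper proves the required identity
\begin{align}
	\frac{n!}{j_1!\cdots j_k!}\,\kappa(\omega)\ =\ \prod_{i=1}^{k-1} {j_1+\cdots +j_{i+1}-1 \choose j_1+\cdots +j_i}
\end{align}
by induction on $k$: it verifies through a chain of factorial manipulations that the closed-form coefficient satisfies the same recursion $N(p^{(k)};p_1,\ldots,p_k)=N(p^{(k)}-1;p_1,\ldots,p_{k-1})$ that Lemma \ref{partitionLemma} establishes for the partition count --- which is precisely the \emph{alternative} you sketch in your closing paragraph, not your main argument. Your main argument is instead a direct, non-inductive verification: telescope the multinomial as $\prod_{i=1}^{k-1}{j_1+\cdots+j_{i+1} \choose j_{i+1}}$, cancel $j_1/j_1$ in $\kappa(\omega)$ so that $\kappa(\omega)=\prod_{i=1}^{k-1}\frac{j_{i+1}}{j_1+\cdots+j_{i+1}}$, and collapse factor-by-factor via the identity ${s \choose t}\frac{t}{s}={s-1 \choose t-1}$, landing exactly on the second product form in Lemma \ref{partitionLemma}. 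This buys a one-line closed-form check that avoids the induction entirely and is arguably more transparent than the paper's computation (which, as written, even contains a typo: the binomial ${j^{(k-1)}-1 \choose j^{(k-1)}}$ appearing in its chain of equalities vanishes identically and should read ${j^{(k)}-1 \choose j^{(k-1)}}$); the paper's recursion-matching approach, in turn, stays closer to the combinatorial mechanism of Lemma \ref{partitionLemma} and reuses its proof structure. One caveat on your preliminary step: the ``reduction from general $n$ to $n=p^{(k)}$'' is vacuous here, since $n=|\omega|=j_1+\cdots+j_k$ by the grading, so $n=p^{(k)}$ automatically; moreover, with the paper's definition ${n \choose \omega}=n!/(j_1!\cdots j_k!)$ the claimed factorization ${n \choose \omega}={n \choose p^{(k)}}{p^{(k)} \choose j_1,\ldots,j_k}$ would actually fail for $n>p^{(k)}$ (a factor $(n-p^{(k)})!$ is missing unless one adjoins an implicit final block of that size), but since that case never arises in the theorem, the slip is harmless and the core of your argument stands.
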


\begin{proof}
We show that
\begin{align}
	\frac{n!}{j_1! \cdots j_k!} \cdot \frac{j_1 \dots j_k}{j_1 \cdots (j_1 + j_2 + \cdots + j_k)} 
	= \prod_{i=1}^{k-1} {j_1 + \cdots + j_{i+1} - 1 \choose j_1 + \cdots + j_i}
\end{align}
by showing that the left side also satisfies Equation \eqref{partitionLemma-eq2}. This is a straightforward calculation. Note first that for $k=1$ ($n=j_1$) we get $1$. Write $j^{(k)} = n = j_1 + \cdots + j_k$.
\begin{align}
&\frac{j^{(k)}!}{j_1! \cdots j_k!} \cdot \frac{j_1 \cdots j_k}{j_1 \cdots (j_1 + j_2 + \cdots + j_k)} \\
&= \frac{(j^{(k)}-1)!}{j_1! \cdots (j_k-1)!} \cdot \frac{j_1 \cdots j_{k-1}}{j_1 \cdots (j_1 + j_2 + \cdots + j_{k-1})} \\
&\quad = \frac{(j_1 + \cdots + j_{k-1}+1) \cdots (j_1 + \cdots + j_k -1)}{(j_k-1)!}\\ 
& \qquad \qquad \cdot \frac{j^{(k-1)}!}{j_1! \cdots j_{k-1}!}\cdot \frac{j_1 \cdots j_{k-1}}{j_1 \cdots (j_1 + j_2 + \cdots + j_{k-1})}\\
&\quad = {j^{(k-1)}-1 \choose j^{(k-1)}} \cdot \frac{j^{(k-1)}!}{j_1! \cdots j_{k-1}!}\cdot \frac{j_1 \cdots j_{k-1}}{j_1 \cdots (j_1 + j_2 + \dots + j_{k-1})} \\
&\quad = N(j^{(k)}-1; j_1, \ldots, j_{k-1}).
\end{align}

\end{proof}

We arrive at the following formula for the noncommutative Bell polynomials.
\begin{align}
	B_{n,k} = \sum_{\#(\omega) = k} \prod_{i=1}^{k-1} {j_1 + \cdots + j_{i+1} - 1 \choose j_{i+1} - 1} \omega
\end{align}

\begin{remark}[The q-analogs of Bell polynomials]\label{q-Bell}
As an interesting side note, we mention the \emph{q-analogs} of commutative Bell polynomials, constructed by Johnson in \cite{johnson1996sao, johnson1996aqa} based on the work of Gessel (\cite{gessel1982aqa}). The construction is based on q-analogs of integers, defined for any integer $n$ as 
\begin{align}
	[n] := \frac{1-q^n}{1-q} = 1+q+ \cdots + q^{n-1}.
\end{align}
To define the $q$-Bell polynomials we will need the $q$-analogous of factorials:
\begin{align}
	n!_q &= [1][2]\cdots[n] \\
		&= 1 \cdot (1+q) \cdots (1 + q + q^2 + \cdots + q^{n-1}),
\end{align}
and multinomials:
\begin{align}
	{n \brack m_1, m_2, \dots, m_k} &= \frac{n!_q}{m_1!_q m_2!_q \cdots m_k!_q}.
\end{align}

One can define commutative q-Bell polynomials (\cite{johnson1996sao}) as follows. 
\begin{definition} For a word $\omega = d_{p_1}\cdots d_{p_k}$ the $q$-Bell polynomial is
\begin{align}
\label{cqBell}
&B_{n,k,q}(d_1,d_2, \ldots, d_{n-k+1}) \\ &\quad = \sum_{{|\omega| = n} \atop p_i \geq 1} \frac{n!_q}{p_1!_q \cdots p_k!_q} \cdot \frac{p_1\cdots p_k}{[p_1][p_1+p_2] \cdots [p_1 + \cdots + p_k]} \omega\\
& \quad = \sum_{{|\omega| = n} \atop p_i \geq 1} {n \brack p_1, \dots, p_k} \kappa(\omega) \omega.
\end{align}
\end{definition}
\end{remark}

\begin{remark}
Bell polynomials also appear in the study of \emph{noncommutative symmetric functions} \cite{gelfand1995ncs}, where they provide a change of basis. See \cite{gelfand1995ncs}.
\end{remark}

\section{Incidence and Faà di Bruno Hopf algebras} 
\label{sect:FdB}

Commutative Bell polynomials model the composition of formal diffeomorphisms on vector spaces via the Faà di Bruno formula, see equation (\ref{derivativesBell1}). This can be captured algebraically in the Faà di Bruno Hopf algebra, where composition of diffeomorphisms corresponds to convolution (Section \ref{Sect:FdB}). For diffeomorphisms on more general manifolds the noncommutative Bell polynomials play an analogous role, and give rise to the Dynkin Faà di Bruno Hopf algebra (Section \ref{Sect:DFdB}), first studied in \cite{lundervold2011hao}.

\subsection{Faà di Bruno formulas}\label{Sect:FdBform}

Recall that the $n$-th derivative of a composition $f \circ g$ can be written using the well-known Faà di Bruno formula \cite{faadibruno1855ssd, faadibruno1857nsu}\footnote{Faà di Bruno was not the first to express derivatives of $f\circ g$ in this way, but his result is the most well-known. Earlier results can be found in \cite{ta1850sld, arbogast1800dcd}. See \cite{johnson2002tch} for a historical account of the Faà di Bruno formula.}:
\begin{align}
\frac{d^n}{dx^n} f(g(x)) = \sum_{j_1 + \cdots + j_n = k \atop j_1 + \cdots + nj_n = n} \frac{n!}{j_1! \cdots j_n!} f^{(k)}(g(x)) \left(\frac{g'(x)}{1!}\right)^{j_1} \cdots \left(\frac{g^{(n)}(x)}{n!}\right)^{j_n},
\end{align}
where all the necessary derivatives are assumed to exist. This is highly reminiscent of Bell polynomials. Indeed, we can write
\begin{align}
\label{derivativesBell1}
	\frac{d^n}{dx^n} f(g(x)) = \sum_{k=0}^n f^{(k)}(g(x)) B_{n,k}(g'(x), g''(x), \dots, g^{(n-k+1)}(x)),
\end{align}
where $B_{n,k}$ are the commutative Bell polynomials, and $B_{n,0} = 0$ for $n \neq 0$. This is known as \emph{Riordan's formula} (\cite{riordan1946doc}).

\paragraph{Examples:}
\begin{align}
	\frac{d}{dx} f(g(x)) 		&= f'(g(x)) B_{1,1}(g'(x)) = f'(g(x)) g'(x) \\
	\frac{d^2}{dx^2} f(g(x)) 	&= f'(g(x))B_{2,1}(g'(x), g''(x)) + f''(g(x)) B_{2,2}(g'(x),g''(x)) \\ 
						& = f'(g(x))g''(x) + f''(g(x)) (g'(x))^2.
\end{align}
We shall see how this can be formulated in terms of composition of diffeomorphisms on vector spaces, where the coefficients of the composition $f \circ g$ of two diffeomorphisms are given in terms of the coefficients of $f$ and $g$. The noncommutative Bell polynomials will be shown to correspond to composition of diffeomorphisms on manifolds. 

\begin{remark}
From the description of Bell polynomials as determinants in Section \ref{Sect:quasideterminants} we obtain
\begin{align}
\frac{d^n}{dx^n} f(g(x)) = \left|\begin{array}{cccccc} 
{0 \choose 0} g'f & {1 \choose 1} g''f & {2 \choose 2} g'''f & \cdots & {n-2 \choose n-2} g^{(n-1)}f & {n-1 \choose n-1} g^{(n)}f \\ \\
-1 & {1 \choose 0} g'f & {2 \choose 1} g''f & \cdots &  {n-2 \choose n-3} g^{(n-2)}f & {n-1 \choose n-2} g^{(n-1)}f \\ \\
0 & -1 & {2 \choose 0} g'f & \cdots & {n-2 \choose n-4} g^{(n-3)}f & {n-1 \choose n-3} g^{(n-2)}f\\ \\
\vdots & \vdots & \vdots & & \vdots & \vdots\\ \\ 
0 & 0 & 0 & \cdots & {n-2 \choose 0} g'f & {n-1 \choose 1} g''f \\ \\
0 & 0 & 0 & \cdots & -1 & {n-1 \choose 0} g'f
\end{array}\right|
\end{align}
This formula was first discovered by Faà di Bruno (\cite{faadibruno1855ssd}).
\end{remark}

\begin{remark}[A q-analog]
As mentioned in Remark \ref{q-Bell}, \cite{johnson1996aqa} develops q-analogs of Bell polynomials, which can be used in a q-analog of the Faà di Bruno formula. The formula can be written as a sum indexed over partitions:
\begin{align}\label{qFdB1}
\mathbf{D}^n_q g[f(x)] &= \sum_{P \in \mathcal{P}(n)} q^{w(P)} g^{(k)}[f(x)] f^{(p_1)}(x) f^{(p_2)}(q^{p^{(2)}}x) \cdots f^{(p_k)}(q^{p^{(k)}}x),
\end{align}
where $p_i:=|P_i|$, and $p^{(j)}:=p_1 + \cdots + p_{j-1}$. The \emph{weight} $w(P)$ of the partition $P$ is calculated as follows: In a partition $P$ of $[n]$ into blocks $P_1, P_2, \cdots P_k$, iteratively cross out the block $P_{\max}^1$ whose maximum is largest, relabel the numbers in the remaining partitions in an order preserving manner from $1$ to $|P_{\max}^1|$. The number of relabelings is called $r_1$. Continue in this manner until there's only one block remaining. The weight of $P$ is the sum of the $r_i$. For example, the weight of the partition $P$ with blocks
$\{1,2,7\}, \{3,6\}, \{4,5\}, \{8,9,13,14\}, \{10,12\}, \{11\}$ is $9$.

The following lemma allows us to rewrite Johnson's q-Faà di Bruno formula in a more familiar form.

\begin{lemma}[{\cite{johnson1996aqa}}]
Let $P$ be a partition of $[n]$ into the blocks $\{P_1, \dots, P_k\}$, listed in increasing order of their maximal elements, with $|P_i| = p_i$. Then
\begin{align}
&\sum_{P \in \mathcal{P}_n} q^{w(P)} \\ & \quad = {p_1 + p_2 + \cdots + p_k - 1 \brack p_k-1}_q {p_1 + p_2 + \cdots + p_{k-1} - 1 \brack p_{k-1}-1}_q \cdots {p_1 + p_2 \brack p_2-1}_q
\end{align}
\end{lemma}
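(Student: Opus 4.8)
The plan is to prove the identity by showing that the weighted sum $\sum_{P \in \mathcal{P}_n} q^{w(P)}$, restricted to partitions with fixed block sizes $p_1 \le \cdots \le p_k$ (listed by increasing maxima), satisfies the same recursion as the product of Gaussian binomials on the right-hand side. The key is to understand the weight $w(P)$ combinatorially as a statistic that accumulates an inversion-like count at each stage of the iterative ``cross-out-the-block-with-largest-maximum'' procedure. First I would isolate the outermost step: in any such $P$, the maximal element $n$ lies in the block $P_k$ (the one whose maximum is largest), so crossing out $P_k$ contributes a factor $q^{r_1}$ and leaves a partition of a relabeled set of size $n - p_k$ into blocks of sizes $p_1, \ldots, p_{k-1}$. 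The plan is to argue that summing $q^{r_1}$ over all admissible placements of $P_k$ — equivalently, over all ways to choose which $p_k - 1$ of the first $n-1$ elements accompany $n$ in its block, keeping $\max(P_k) = n$ — produces exactly the Gaussian binomial $\qbinom{p_1 + \cdots + p_k - 1}{p_k - 1}_q$.

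The crucial computation is this leading factor. I would show that $r_1$, the number of order-preserving relabelings incurred when removing $P_k$, equals the number of inversions between the $p_k - 1$ non-maximal elements of $P_k$ and the $n - p_k$ elements of the remaining blocks — that is, $r_1$ counts pairs where an element of $P_k \setminus \{n\}$ exceeds an element lying in a lower-indexed block. With this interpretation, summing $q^{r_1}$ over all $\binom{n-1}{p_k-1}$ ways to select the subset $P_k \setminus \{n\} \subseteq [n-1]$, weighted by the number of inversions relative to the fixed remaining set, is precisely the standard generating-function identity $\sum_{S} q^{\mathrm{inv}(S)} = \qbinom{n-1}{p_k-1}_q$ for $q$-binomial coefficients (the $q$-analog of counting subsets, since $n - 1 = (p_1 + \cdots + p_k) - 1$). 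The remaining blocks then form a partition of the relabeled $[n-p_k]$ whose weight, by the definition of $w$, is exactly $\sum_{P' } q^{w(P')}$ over partitions with block sizes $p_1, \ldots, p_{k-1}$.

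This gives the recursion
\begin{align}
\sum_{P \in \mathcal{P}_n} q^{w(P)}
= \qbinom{p_1 + \cdots + p_k - 1}{p_k - 1}_q \sum_{P' \in \mathcal{P}_{n - p_k}} q^{w(P')},
\end{align}
and iterating it down to a single block (where the sum is $1$, matching $N(p_1; p_1) = 1$ in the unweighted case) telescopes into the claimed product of Gaussian binomials. I would note that this parallels Lemma~\ref{partitionLemma}, which established the $q=1$ case with ordinary binomials via essentially the same block-removal recursion; the present lemma is the natural $q$-deformation, with $q$ tracking the relabeling statistic.

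I expect the main obstacle to be the precise identification $r_1 = \mathrm{inv}(P_k \setminus \{n\})$ and verifying that the relabeling count in Johnson's definition genuinely coincides with the subset-inversion statistic whose generating function is the Gaussian binomial. The definition of $w(P)$ is stated somewhat informally (``relabel in an order preserving manner,'' ``the number of relabelings is called $r_1$''), so making this rigorous requires pinning down exactly what is being counted at each crossing-out step and confirming it is shift-invariant under the relabeling so that the weights of the successive reduced partitions add up correctly. Once that statistic is correctly interpreted, the algebra reduces to the well-known $q$-subset identity and the induction is routine.
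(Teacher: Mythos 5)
The paper offers no proof of this lemma: it is imported verbatim from Johnson \cite{johnson1996aqa} inside a remark, so there is no internal argument to compare against. Measured on its own terms, your skeleton is the right one, and it is exactly the $q$-deformation of the paper's proof of Lemma~\ref{partitionLemma}: peel off the block containing $n$ (the block of largest maximum), extract a factor from the choice of the $p_k-1$ companions of $n$ inside $[n-1]$, recurse on the relabeled partition of $[n-p_k]$, and telescope. The structural points you need for the factorization are also correctly identified: $r_1$ depends only on the subset $S=P_k\setminus\{n\}\subseteq[n-1]$ and not on how the remaining elements are partitioned; the map $P\mapsto(S,P')$ is a bijection onto pairs (subset, type-$(p_1,\dots,p_{k-1})$ partition); and order-preserving relabeling makes the weight additive, $w(P)=r_1+w(P')$, so the sum factors as $\bigl(\sum_S q^{r_1(S)}\bigr)\bigl(\sum_{P'}q^{w(P')}\bigr)$.

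The one genuine flaw is in your crucial identification: $r_1$ is \emph{not} the number of pairs where an element of $P_k\setminus\{n\}$ \emph{exceeds} a remaining element. Crossing out elements lying \emph{above} $x$ does not change the label of $x$ at all; what forces relabelings are removed elements lying \emph{below} remaining ones. Under the only reading of ``number of relabelings'' that makes the lemma true --- total displacement, where each remaining $x$ is relabeled once for every removed $b<x$ --- one gets $r_1=\#\{(b,x): b\in P_k\setminus\{n\},\ x\notin P_k,\ b<x\}$, the \emph{complement} of your statistic. (The cruder reading ``number of elements whose label changes'' is ruled out by $n=4$, type $(p_1,p_2)=(1,3)$: it yields $1+2q$ rather than ${3 \brack 2}_q=1+q+q^2$, whereas displacement gives weights $0,1,2$ as required; your write-up should include such a check.) Your slip is benign for the final identity, because the Gaussian binomial is invariant under complementing the inversion statistic ($q\mapsto q^{-1}$ symmetry), so both $\sum_S q^{\mathrm{inv}(S)}$ and the complementary sum equal ${p_1+\cdots+p_k-1 \brack p_k-1}_q$ and your recursion and product go through --- but since the lemma asserts an identity for the \emph{specific} statistic $w$, a complete proof must fix the direction per partition. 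You were right to flag the informality of the weight's definition as the danger point; note in this connection that the paper's own transcription is unreliable here: the worked example's stated weight $9$ matches neither natural reading (displacement gives $17$, count-of-changed-labels gives $10$), the last displayed factor ${p_1+p_2 \brack p_2-1}_q$ should by the evident pattern be ${p_1+p_2-1 \brack p_2-1}_q$, and the left-hand sum must be restricted to partitions of the fixed type, as you tacitly assumed --- so Johnson's original definition has to be consulted to nail down $r_1$ before the identification step can be made rigorous.
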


Using the q-Bell polynomials defined in Remark \ref{q-Bell}, the q-analogue of the Faà di Bruno formula \eqref{qFdB1} can be written as
\begin{align}\label{qFdB2}
\mathbf{D}^n_q g[f(x)] = \sum_{ p_1 + \cdots + p_k =n \atop p_i \geq 1} g^{(k)}[f(x)] B_{n,k,q}(f_{p_1,0}, f_{p_2,p_1}, f_{p_3,p_1+p_2}, \dots),
\end{align}
where $f_{i,j} := f^{(i)}(q^j x)$.
\end{remark}

\subsubsection{Composition of formal diffeomorphisms on vector spaces.} 

Composition of smooth and invertible functions on the real line $\mathbb{R}$ forms a group, called the group of diffeomorphisms on $\mathbb{R}$. Following \cite{figueroa2005fdb, frabetti2014fio}, we consider the group write $G = \Diff(\mathbb{R})$ of \emph{formal} diffeomorphisms leaving the origin fixed:
\begin{align}
	G = \left\{f(t) = \sum_{n=1}^{\infty} \frac{f_n}{n!} t^n, \quad f_0 = 0, \,f_1 > 0 \right\}.
\end{align}
Let $h = f\circ g$ be a composition:
\begin{align}
	h(t) = \sum_{k=1}^{\infty} \frac{f_k}{k!} \left(\sum_{l=1}^{\infty} \frac{g_l}{l!} t^l\right)^k.
\end{align}
The Cauchy product formula gives
\begin{align}
	h(t) = \sum_{k=1}^{n} \frac{f_k}{k!}\sum_{ l_1 + \dots + l_k = n \atop l_i \geq 1} 
	\frac{n! g_{l_1}\cdots g_{l_k}}{l_1! \cdots l_k!}.
\end{align}
In other words,
\begin{align}
	h_n = \sum_{k=1}^n f_k \sum_{\lambda}\frac{n!}{\lambda_1! \cdots \lambda_n!} 
	\frac{g_1^{\lambda_1} \cdots g_n^{\lambda_n}}{(1!)^{\lambda_1} (2!)^{\lambda_2} \cdots (n!)^{\lambda_n}},
\end{align}
or
\begin{align}
	h^{(n)}(t) = \sum_{k=1}^n \sum_{\lambda} \frac{n!}{\lambda_1! \cdots \lambda_n!} f^{(k)}(g(t)) \left(\frac{g^{(1)}(t)}{1!}\right)^{\lambda_1} \cdots \left(\frac{g^{(n)}(t)}{n!}\right)^{\lambda_n},
\end{align}
where 
\begin{align}
	\lambda_1 + 2\lambda_2 + \dots + n\lambda_n = n, \quad \text{with  } \lambda_1 + \dots + \lambda_n = k.
\end{align}
This can also be written as
\begin{align}
	h_n = \sum_{k=1}^n f_k B_{n,k}(g_1, \dots, g_{n+1-k}),
\end{align}
or
\begin{align}
	h^{(n)}(t) = \sum_{k=1}^n f^{(k)}(g(t)) B_{n,k}(g^{(1)}(t), \dots, g^{(n-k+1)}(t)),
\end{align}
which is the same as Formula \eqref{derivativesBell1}. For example,
\begin{align}
	h_1 &= f_1 B_{1,1}(g_1) = f_1 g_1 \\
	h_2 &= f_1 B_{2,1}(g_1,g_2) + f_2 B_{2,2}(g_1,g_2) = f_1 g_2 + f_2 (g_1)^2.
\end{align}
See \cite{figueroa2005fdb} or \cite{frabetti2014fio} for more details.

\subsubsection{Composition of diffeomorphisms on manifolds}
\label{Sect:FdBManifolds}

We shall see how noncommutative Bell polynomials model the composition of time-dependent flows on manifolds. We merely describe the main constructions. For more details, consult \cite{lundervold2011hao, munthe-kaas1995lbt}. For background material about the relevant constructions from differential geometry, see e.g. \cite{marsden2007mta, sharpe1997dg}. 

The derivation operation we consider is the so-called \emph{Lie derivative}:
\begin{definition}
The \emph{Lie derivative} of a function $\psi: M \rightarrow \mathbb{R}$ along a vector field $F \in \mathcal{X}(M)$ is
\begin{align}
	F[\psi](x) := \mathbf{d} \psi(x) \cdot F(x),
\end{align}
where $\mathbf{d} \psi: M \rightarrow T^*M$ is the differential of $\psi$.
\end{definition}

Note that if $M$ is finite dimensional then (in local coordinates)
\begin{align}
	F(x) = \sum_{i=1}^n F^i(x) \frac{\partial}{\partial x^i},
\end{align}
and
\begin{align}
	(\mathbf{d} \psi)_i = \frac{\partial \psi}{\partial x^i}, \quad F[\psi] = \sum_{i=1}^n F^i \frac{\partial \psi}{\partial x_i}.
\end{align} 
Here $\{\frac{\partial}{\partial x^i}\}$ spans the space $\mathcal{X}(M)$ of vector fields in the local coordinates $\{x_1, \dots, x_n\}$. If $M$ is \emph{parallelizable} then this is a global basis (e.g.~for any Lie group). Note further that if $M = \mathbb{R}$ then
\begin{align}
	F[\psi] = \psi'(x) F(x).
\end{align}

We want to define the Lie derivative $F[G]$ of a vector field $G$ along another vector field $F$. Let $\Phi_{t,s}: M \rightarrow M$ be the \emph{flow} of $F_t$, i.e. the diffeomorphisms $\Phi_{t,s}$ such that $t \mapsto \Phi_{t,s}$ is the integral curve of $F$ starting at $m$ at time $t=s$:  
\begin{align}
	\frac{d}{dt} \Phi_{t,s} = F_t(\Phi_{t,s}), \quad \Phi_{s,s}(m) = m.
\end{align}
Write $\Phi_{t,F}$ for the flow of the vector field $F$ starting at time $t=0$. Let $\psi: M \rightarrow \mathcal{V}$ be a section of a trivial bundle over $M$. We form the pullback of $\psi$ along the flow $\Phi_{t,F}$:
\begin{align}
	\Phi_{t,F}^* \psi := \psi \circ \Phi_{t,F}.
\end{align}
We are interested in computing its derivatives.

\begin{definition}
Let $F,G \in \mathcal{X}(M)$ be two vector fields, and write $\Phi_{t,F}$ for the flow of $F$. The \emph{Lie derivative} of $G$ with respect to $F$ is defined by
\begin{align}
	F[G] := \frac{d}{dt}\bigg|_{t=0} \Phi_{t,F}^* G.
\end{align}
\end{definition}
Note that the Lie derivative is a derivation: if $\phi: M \rightarrow \mathbb{R}$ and $G \in \mathcal{X}(M)$ then 
\begin{align}
	F[\psi G] = F[\psi]G + \psi(F[G]).
\end{align}
Composition of Lie derivatives gives a (associative, noncommutative) product on the space $\mathcal{X}(M)$ of vector fields on $M$. 

Vector fields are invariant under their own flow, $\Phi_{t,F}^* F = F$, so  
\begin{align}
	F[F] = \frac{d}{dt}\bigg|_{t=0} \Phi_{t,F}^*F = \frac{d}{dt}\bigg|_{t=0} F.
\end{align}
The basic derivative formula is

\begin{align}\label{basicPullback}
	\frac{d}{dt} \Phi_{t,F}^* \psi = \Phi_{t,F}^*(F[\psi]),
\end{align}
which follows from a simple application of the chain rule (see \cite[Theorem 4.2.31]{marsden2007mta}). In particular,
\begin{align}
	\frac{d}{dt}\bigg|_{t=0} \Phi_{t,F}^* \psi = F[\psi].
\end{align}
By iterating Formula \eqref{basicPullback} we get
\begin{align}
	\frac{d^n}{dt^n}\bigg|_{t=0} \Phi_{t,F}^* = F[F[\cdots F[\psi]\cdots]] = F^n[\psi],
\end{align}
and the Taylor expansion of the pullback can be written as
\begin{align}
	\Phi_{t,F}^*\psi = \psi + tF[\psi] + \frac{t^2}{2!} F[F[\psi]] + \cdots.
\end{align}
This can be formulated in terms of the noncommutative Bell polynomials.

\begin{theorem}[{\cite{lundervold2011hao}}]\label{pullbackAndBell}
We have
\begin{align}
	\frac{d^n}{dt^n} \Phi_{t,F}^* \psi = B_n(F)[\psi],
\end{align}
where $B_n(F)$ is the image of the noncommutative Bell polynomials $B_n(d_1, \ldots, d_n)$ under the map $d_i \mapsto F^{(i-1)}$. In particular
\begin{align}
	\frac{d^n}{dt^n}\bigg|_{t=0} \Phi_{t,F^t}^* \psi = B_n(F_1, \ldots, F_n)[\psi],
\end{align}
where $F_{n+1} = F^{(n)}(0)$.
\end{theorem}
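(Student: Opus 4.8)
The plan is to induct on $n$, matching the two operations that generate the Bell polynomials in Definition \ref{def:recursiveBell} — left multiplication by $d_1$ and the derivation $\partial$ — with two geometric operations performed along the flow. Here $F$ denotes the time-dependent field $F_t$ and $F^{(k)} = \frac{d^k}{dt^k}F_t$, so that under the substitution $d_i \mapsto F^{(i-1)}$ left multiplication by $d_1$ becomes the Lie derivative $F_t[\,\cdot\,]$, while $\partial$, which sends $d_i \mapsto d_{i+1}$, becomes the \emph{explicit} time derivative $\frac{\partial}{\partial t}$: indeed $\frac{\partial}{\partial t}F^{(i-1)} = F^{(i)}$ is exactly the image of $d_{i+1} = \partial d_i$, and both $\partial$ and $\frac{\partial}{\partial t}$ act on words and compositions by the Leibniz rule. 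A short induction on word length then shows these two derivations are intertwined by the substitution: for any $\omega \in \mathcal{D}$ one has $\frac{\partial}{\partial t}\big(\omega(F)[\psi]\big) = (\partial\omega)(F)[\psi]$ and $F_t\big[\omega(F)[\psi]\big] = (d_1\omega)(F)[\psi]$, the first because $\psi$ carries no explicit time dependence so the derivative falls only on the $F^{(\cdot)}$ factors.

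The second ingredient is a product rule for differentiating the pullback of a time-dependent function along the flow. Writing $g_t$ for a $t$-dependent function and separating the explicit time dependence from the flow dependence, the basic formula \eqref{basicPullback} extends to
\begin{equation}
\frac{d}{dt}\Phi_{t,F}^*(g_t) = \Phi_{t,F}^*\Big(\frac{\partial g_t}{\partial t} + F_t[g_t]\Big),
\end{equation}
which follows from the chain rule exactly as in \cite[Theorem 4.2.31]{marsden2007mta}, the extra term $\frac{\partial g_t}{\partial t}$ accounting for the explicit time dependence of $g_t$.

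With these two facts the induction is immediate. I would prove the sharper identity $\frac{d^n}{dt^n}\Phi_{t,F}^*\psi = \Phi_{t,F}^*\big(B_n(F)[\psi]\big)$, which reduces to the displayed statements upon evaluation at $t=0$ (where $\Phi_{0,F}=\mathrm{id}$); the pullback $\Phi_{t,F}^*$ is understood on the right-hand side of the first display, as already forced by the case $n=1$. For $n=0$ we have $\Phi_{t,F}^*\psi = \psi = B_0(F)[\psi]$. Assuming the identity for $n$ and applying the product rule above with $g_t = B_n(F)[\psi]$,
\begin{align}
\frac{d^{n+1}}{dt^{n+1}}\Phi_{t,F}^*\psi
&= \Phi_{t,F}^*\Big(\frac{\partial}{\partial t}\big(B_n(F)[\psi]\big) + F_t\big[B_n(F)[\psi]\big]\Big) \\
&= \Phi_{t,F}^*\big((\partial B_n)(F)[\psi] + (d_1 B_n)(F)[\psi]\big) \\
&= \Phi_{t,F}^*\big(((d_1 + \partial)B_n)(F)[\psi]\big)
= \Phi_{t,F}^*\big(B_{n+1}(F)[\psi]\big),
\end{align}
where the last step is the Bell recursion $B_{n+1} = (d_1+\partial)B_n$ of Definition \ref{def:recursiveBell}. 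Evaluating at $t=0$, where $F^{(i-1)}(0) = F_i$, yields the stated formulas.

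The main obstacle is the correct bookkeeping of the time dependence of $F_t$: the basic formula \eqref{basicPullback} is autonomous, and for a time-dependent field one must split the derivative into the flow contribution $F_t[\,\cdot\,]$ and the explicit contribution $\frac{\partial}{\partial t}$. Verifying that this second contribution is intertwined by the substitution with the abstract derivation $\partial$ — so that the extra term slots exactly into the $(d_1 + \partial)$ recursion rather than producing spurious terms — is the crux of the argument, and is precisely the reason the derivation $\partial\colon d_i\mapsto d_{i+1}$ appears in the very definition of the noncommutative Bell polynomials.
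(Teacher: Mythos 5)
Your proof is correct. Note first that the paper itself contains no proof of Theorem \ref{pullbackAndBell}: the result is imported from \cite{lundervold2011hao}, and the surrounding text only establishes the time-independent case by iterating \eqref{basicPullback} to get $F^n[\psi]$. Your induction is exactly the argument that fills this gap, and it is the natural one given the paper's setup: the sharper identity $\frac{d^n}{dt^n}\Phi_{t,F}^*\psi = \Phi_{t,F}^*\bigl(B_n(F)[\psi]\bigr)$ is indeed what the induction must close on, and your observation that the pullback is implicitly understood on the right-hand side of the first display (being forced already at $n=1$, and disappearing at $t=0$ since $\Phi_{0,F}=\mathrm{id}$) is the correct repair of the statement's abuse of notation, consistent with the ``in particular'' clause. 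The two load-bearing steps are handled properly: the time-dependent extension $\frac{d}{dt}\Phi_{t,F}^*(g_t)=\Phi_{t,F}^*\bigl(\partial_t g_t + F_t[g_t]\bigr)$ of \eqref{basicPullback} is the standard chain-rule computation (and is where the non-autonomous bookkeeping lives, as you say), and the intertwining claims are right, including the orientation convention that the leftmost letter of a word acts as the \emph{outermost} Lie derivative --- one can check this is the convention forced by the paper's example $\frac{d^3}{dt^3}\big|_{t=0}\Phi_{t,F}^*\psi = F_1^3[\psi]+(F_2F_1)[\psi]+2(F_1F_2)[\psi]+F_3[\psi]$ against $B_3 = d_1^3+d_2d_1+2d_1d_2+d_3$, so that the new contributions $F_t[\,\cdot\,]$ and $\partial_t$ land precisely on $d_1B_n$ and $\partial B_n$, matching the recursion $B_{n+1}=(d_1+\partial)B_n$ of Definition \ref{def:recursiveBell} rather than its right-multiplied variant.
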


For example,  
\begin{align}
\frac{d}{dt}\bigg|_{t=0} \Phi_{t,F}^* \psi &= B_1(F_1)[\psi] = F_1[\psi]\\
\frac{d^2}{dt^2}\bigg|_{t=0} \Phi_{t,F}^* \psi &= B_2(F_1, F_2)[\psi] = F_1^2[\psi] + F_2[\psi]\\
\frac{d^3}{dt^3}\bigg|_{t=0} \Phi_{t,F}^* \psi &= B_3(F_1, F_2, F_3)[\psi] = F_1^{3}[\psi] + (F_2F_1)[\psi] + 2(F_1F_2)[\psi] + F_3[\psi]
\end{align}

\begin{remark}
Let $M = \mathbb{R}$. Then $\psi: \mathbb{R} \rightarrow \mathbb{R}$, $F:\mathbb{R} \rightarrow \mathbb{R}$, and the Lie derivative is $F[\psi] = \psi'(x)F(x)$, so Formula \eqref{derivativesBell1} and the formula in Theorem \eqref{pullbackAndBell} agree.
\end{remark}

\subsection{Faà di Bruno Hopf and bialgebras.}
\label{Sect:incidenceHopf}

This section contains descriptions of commutative and noncommutative Faà di Bruno Hopf algebras, both constructed as incidence Hopf algebras and directly from the Bell polynomials. We start with a short presentation of incidence Hopf algebras.

\subsubsection{Incidence Hopf algebras}

Incidence Hopf algebras have been defined and studied intensively by W.~Schmitt \cite{schmitt1994iha}, starting from the notions of incidence algebra \cite{rota1964otf} and incidence bi- and coalgebras \cite{joni1979cab}. The framework incorporates various combinatorial Hopf algebras, such as symmetric functions, the Butcher--Connes--Kreimer Hopf algebra of rooted forests, Hopf algebras of finite posets, and various Fa\`a di Bruno Hopf algebras.

A \emph{poset} is a partially ordered set $P$ with an order relation, which we denote the by $\le$. For any $x,y \in P$, the \emph{interval} $[x,y]$ is the subset of $P$ formed by the elements $z$ such that $x\le z\le y$. Let $\P$ be a family of finite posets $P$ such that there exists a unique minimal element $0_P$ and a unique maximal element $1_P$ in $P$ (hence the poset $P$ coincides with the interval $P=[0_P,1_P]$). The family is called \emph{interval closed} if for any poset $P \in\P$ and for any $x \le y \in P$, the interval $[x,y]$ is an element of $\P$.

From the family $\P$ one can construct a coalgebra by considering equivalence classes of elements under an \emph{order-compatible} relation $\sim$ on $\P$. That is, $P \sim Q \in \P$ if there exists a bijection $\varphi: P \to Q$ such that:
\begin{align}
	[0_P,x] \sim [0_Q,\varphi(x)] \hbox { and } [x,1_P]\sim[\varphi(x),1_Q]
\end{align}
for any $x \in P$. An obvious example of order-compatible equivalence relation is poset isomorphism, but it is useful to consider more general situations.

Let $\overline{\P}$ be the quotient $\P/\sim$, where $\sim$ is an order-compatible relation. The equivalence class of any poset $P\in\P$ is denoted by $\overline P$ (notation borrowed from \cite{ehrenborg1996opa}). The \emph{incidence coalgebra} of the family of posets $\P$ together with the equivalence relation $\sim$ is the $\K$-vector space freely generated by $\overline{\P}$, with coproduct given by 
\begin{equation*}
	\Delta(\overline P)=\sum_{x\in P}\overline{[0_P,x]}\otimes \overline{[x,1_P]}, 
\end{equation*}
and counit given by $\varepsilon(\overline{\{*\}})=1$ and $\varepsilon(\overline P)=0$ if $P$ contains two elements or more.

Given two posets $P$ and $Q$, the \emph{direct product} $P \times Q$ is the set-theoretic cartesian product of the two posets, with partial order given by $(p,q) \leq (p',q')$ if and only if $p \leq p'$ and $q \leq q'$. A family of finite posets $\P$ is called \emph{hereditary} if the product $P \times Q$ belongs to $\P$ whenever $P,Q \in \P$. An order-compatible equivalence relation $\sim$ on $\P$ is \emph{reduced} if $P \times Q \sim Q \times P \sim P$ whenever $Q$ is a one-element set.

The quotient $\overline{\P}$ is then a semigroup generated by the set $\overline{\P_0}$ of classes of \emph{indecomposable posets}, i.e. posets $R\in\P$ such that for any $P,Q\in\P$ of cardinality greater than one, $P \times Q$ is not isomorphic to $R$. The unit element $\un$ is the class of any poset with only one element.

\begin{theorem}[{\cite[Theorem 4.1]{schmitt1994iha}}]
If $\P$ is a hereditary family of finite posets and $\sim$ a reduced order-compatible semigroup relation, then the associated incidence coalgebra $H(\P)$ is a Hopf algebra.
\end{theorem}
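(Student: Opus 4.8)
The plan is to verify in turn the three structures—algebra, bialgebra compatibility, and antipode—letting each of the three hypotheses on $\sim$ do its job. First I would make $H(\P)$ an algebra by linearizing the direct product of posets, $\overline P\cdot\overline Q=\overline{P\times Q}$. That $\sim$ is a \emph{semigroup relation} (a congruence for $\times$) is exactly what makes this descend to the quotient $\overline\P$; associativity and commutativity come from the poset isomorphisms $P\times(Q\times R)\cong(P\times Q)\times R$ and $P\times Q\cong Q\times P$, since isomorphic posets are always $\sim$-equivalent; and the \emph{reduced} condition $P\times Q\sim P$ for a one-element $Q$ identifies the class $\un$ of the singleton as a two-sided unit. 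Thus $H(\P)$ is a commutative $\K$-algebra, and by order-compatibility it is already the stated incidence coalgebra.

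The heart of the proof is the compatibility that upgrades this to a bialgebra, i.e. that $\Delta$ and $\varepsilon$ are algebra morphisms. For $\Delta$ I would use the decomposition of intervals in a product poset: every element of $P\times Q$ is a pair $(x,y)$, and
\[
[(0_P,0_Q),(x,y)]\cong[0_P,x]\times[0_Q,y],\qquad [(x,y),(1_P,1_Q)]\cong[x,1_P]\times[y,1_Q].
\]
Summing $\Delta(\overline{P\times Q})$ over all pairs $(x,y)$ and regrouping the tensor factors then gives $\Delta(\overline{P\times Q})=\Delta(\overline P)\cdot\Delta(\overline Q)$ in $H(\P)\otimes H(\P)$; here the semigroup-congruence property is needed once more, so that the $\sim$-class of each product interval is determined by the classes of its two factors. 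For the counit I would note that $P\times Q$ is a one-element poset precisely when both $P$ and $Q$ are, whence $\varepsilon(\overline P\cdot\overline Q)=\varepsilon(\overline P)\varepsilon(\overline Q)$. Together with the unit and counit axioms this makes $H(\P)$ a bialgebra.

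It then remains to produce an antipode, and for this I would show $H(\P)$ is connected (conilpotent) and invoke the standard existence theorem for such bialgebras. The reduced coproduct is $\widetilde\Delta(\overline P)=\sum_{0_P<x<1_P}\overline{[0_P,x]}\otimes\overline{[x,1_P]}$, and for $0_P<x<1_P$ one has $[0_P,x]\subsetneq P$ and $[x,1_P]\subsetneq P$ (the elements $1_P$, resp. $0_P$, are missing), so each tensor factor is a poset of strictly smaller cardinality—an invariant of $\sim$, since it is witnessed by a bijection. Hence $\widetilde\Delta$ is locally nilpotent and $H(\P)$ is connected, so the convolution identity $\mathrm{id}-u\circ\varepsilon$ is locally nilpotent and $\mathrm{id}$ is $*$-invertible; the inverse is the antipode, given by induction on cardinality via $S(\un)=\un$ and $S(\overline P)=-\overline P-\sum_{0_P<x<1_P}S(\overline{[0_P,x]})\,\overline{[x,1_P]}$.

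I expect the main obstacle to lie in the bialgebra compatibility rather than in the antipode: one must check that passing to $\sim$-classes is simultaneously consistent with the product, with the interval decomposition of product posets, and with the coproduct, so that the multiplicativity of $\Delta$ holds genuinely in $\overline\P$ and not merely up to poset isomorphism. Once this is secured, connectedness—and therefore the Hopf-algebra structure—follows essentially for free from the cardinality bookkeeping on the reduced coproduct.
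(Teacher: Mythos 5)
Your route --- the semigroup algebra on $\overline{\P}$, multiplicativity of $\Delta$ via interval decomposition in direct products, and the antipode from conilpotency --- is the standard one (the paper itself gives no proof, citing Schmitt), and most of it is sound. But there is a genuine error: the claim that \emph{isomorphic posets are always $\sim$-equivalent}, which you invoke to get associativity and, fatally, commutativity from $P\times Q\cong Q\times P$. Order-compatibility only says that $P\sim Q$ \emph{implies} the existence of a suitable bijection; it does not say that every poset isomorphism induces an equivalence, and the relations this theorem is meant to cover are strictly finer than isomorphism. Indeed, the paper remarks immediately after the theorem that commutativity holds only \emph{when $\sim$ is poset isomorphism}, and in the construction of the Dynkin--Fa\`a di Bruno incidence Hopf algebra it states explicitly that $[S,T]\times[S',T']$ is \emph{not} equivalent to $[S',T']\times[S,T]$ in general --- the whole point of the noncommutative extension. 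Your argument would prove $\H_{DFdB}$ commutative, contradicting, e.g., the antipode computations $S(X_3)=-X_3+6X_1X_2+4X_2X_1-15X_1^3$ given there.

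The repair is local, because nothing else in your proof uses commutativity. Associativity and the unit should be read off from the hypotheses themselves: ``$\sim$ is a semigroup relation'' means precisely that $\times$ descends to an associative product on $\overline{\P}$ (the quotient is a semigroup generated by the indecomposable classes), and ``reduced'' makes the class $\un$ of a singleton a two-sided unit; commutativity must simply be dropped. Note also that your interval decomposition needs no isomorphism-invariance at all: in $P\times Q$ one has the literal equality
\begin{align}
[(x,y),(x',y')] = [x,x']\times[y,y'],
\end{align}
so the congruence property of $\sim$ alone yields $\Delta(\overline{P\times Q})=\Delta(\overline{P})\cdot\Delta(\overline{Q})$, exactly as you compute. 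Your final paragraph --- cardinality is a $\sim$-invariant (order-compatibility supplies a bijection), tensor factors of the reduced coproduct strictly drop cardinality, hence $\mathrm{id}-u\circ\varepsilon$ is locally $\star$-nilpotent and the recursive antipode is well-founded --- is correct as stated and independent of commutativity, so with this one correction the proof goes through.
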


Note that when $\sim$ is the equivalence relation given by poset isomorphism, the obvious equivalence $P \times Q \sim Q \times P$ for any $P,Q\in \P$ shows that the incidence Hopf algebra is commutative in this case. This is the \emph{standard reduced incidence Hopf algebra} associated with the hereditary family of posets $\P$.

\subsubsection{Antipodes, uniform families and quasideterminants}
\label{determinantAntipode}

Various formulas for antipodes for incidence Hopf algebras subject to some restrictions have been developed, e.g. in \cite{haiman1989iaa, schmitt1987aai, figueroa2005cha, einziger2010iha}. One particularly useful general formula was given in \cite[Theorem 4.1]{schmitt1994iha}:
\begin{align}
	S(\overline{P}) = \sum_{k \geq 0} \sum_{\underset{\underset{x_k = 1_P}{x_0=0_P}}{x_0 < \cdots < x_k}} 
	(-1)^k \prod_{i=1}^k [x_{i-1}, x_i],
\end{align}
for $\overline{P} \in \overline{\P}$. 

For a particular class of posets the antipode can be written as a determinant. More precisely, in \cite[Section 8]{schmitt1994iha} Schmitt defined so-called (commutative) \emph{uniform families} of hereditary posets $\mathcal{P}$ and gave a determinantal formula for the antipodes in the associated commutative incidence Hopf algebras. We extend his definition of uniform families to the noncommutative case, and show that the antipode formula extends to the quasideterminant. 

A uniform family will consist of graded posets. A poset $P$ is called \emph{graded} if all the \emph{chains}, i.e. sets of elements  $0_P = x_1 < x_2 < \cdots < x_n  = 1_P$ in $P$ satisfying 
\begin{align}
	x_i \leq y \leq x_{i+1} \quad \Longrightarrow \quad y=x_i \text{ or } y = x_{i+1}, \quad \forall 1\leq i \leq n-1,
\end{align}
are of the same length $\rank(P)=n$. This common length is called the \emph{rank} of $P$. In a hereditary family $\P$ of graded posets the rank function is well-defined on the quotient $\overline{\P}$, because the equivalence relation is order-compatible. 

\begin{definition}
\label{uniform}
A (commutative or noncommutative) \emph{uniform family} is a hereditary family $\mathcal{P}$ of graded posets together with a reduced order-compatible relation $\sim$ such that
\begin{itemize}
	\item[(1)] If $\overline{P} \in \overline{\mathcal{P}}_0,$ $y \in P$ and $y < 1_P$, then $[y,1_P] \in \overline{\mathcal{P}}_0$.
	\item[(2)] For all $n \geq 1$ there exists exactly one type in $\overline{\mathcal{P}}_0$ having rank $n$.
\end{itemize}
\end{definition}

Let $x_n$ be the unique indecomposable type of rank $n$, $n\geq 1$, and $x_0 = 1$. Then $H(\mathcal{P})$ is isomorphic as a graded algebra to the free associative algebra $\mathbb{K}\langle x_1, x_2, \dots \rangle$, where $\deg(x_n) = n$. Following \cite{schmitt1994iha}, we define the \emph{rank polynomial} $W_{n,k} = W_{n,k}(x_1, x_2, \dots)$ in $H(\mathcal{P})$ by $W_{0,0} = 1$, and for $n \geq 1$, by choosing $[x,y]$ of rank $n$ in $\overline{\mathcal{P}}_0$, and setting
\begin{align}
	W_{n,k} = \sum_{\underset{r[z,y]=k}{z \in [x,y]}} [x,z].
\end{align}
Note that $W_{n,n} = 1$ and $W_{n,0} = x_n$ for $n \geq 0$, and $W_{n,k} = 0$ for $n<k$, and that the coproduct in $H(\mathcal{P})$ can be written as
\begin{align}
	\Delta(x_n) = \sum_{k\geq 0} W_{n,k}\otimes x_k.
\end{align}

Write $M_n$ for the matrix whose $i$th row and $j$th column is $W_{n-i+1, n-j}$, and put $M_0 = I$, the $1\times 1$ identity matrix. For example,
\begin{align}
M_4 = \left(\begin{array}{cccc}
W_{4,3} & W_{4,2} & W_{4,1} & W_{4,0} \\
1 & W_{3,2} & W_{3,1} & W_{3,0}\\
0 & 1 & W_{2,1} & W_{2,0} \\
0 & 0& 1& W_{1,0}
\end{array}\right)
\end{align}

\begin{theorem}
\label{incidenceAntipode}
If $\mathcal{P}$ is a noncommutative uniform family, then the antipode $S$ of $H(\mathcal{P})$ can be written as 
\begin{align}
	S(x_n) = \big|M_n|_{1n},
\end{align}
where $|\cdot|_{1n}$ is the quasideterminant computed at the top right element.
\end{theorem}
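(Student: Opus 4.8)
The plan is to prove the quasideterminantal antipode formula by induction on $n$, using the recursion for the quasideterminant $P(n)$ established in Proposition \ref{quasidetExpansion} and matching it against the defining recursion for the antipode $S$. First I would record the recursion satisfied by the antipode itself. Because $S$ is the convolution inverse of the identity in the Hopf algebra $H(\mathcal{P})$, and the coproduct on the generators is $\Delta(x_n) = \sum_{k \geq 0} W_{n,k} \otimes x_k$, the relation $m \circ (S \otimes \mathrm{id}) \circ \Delta = \eta \varepsilon$ applied to $x_n$ (for $n \geq 1$) yields
\begin{align}
\label{eq:antipodeRec}
	\sum_{k=0}^{n} S(W_{n,k})\, x_k = 0,
\end{align}
where I use $S(W_{n,n}) = S(1) = 1$. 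Since $S$ is an algebra antihomomorphism and the rank polynomials $W_{n,k}$ are themselves polynomials in the generators $x_j$ with $j < n$, one can unwind \eqref{eq:antipodeRec} to express $S(x_n)$ recursively in terms of the $W_{n,k}$ and the lower antipode values $S(x_j)$, $j < n$.

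Next I would identify the matrix $M_n$ of the theorem with the matrix appearing in Proposition \ref{quasiPoly} under the substitution $a_{ij} = W_{n-i+1,\,n-j}$. By construction the superdiagonal (second-diagonal) entries of $M_n$ are $W_{n-i+1,\,n-i} = 1$, matching the $-1$ pattern up to the sign convention in $P(n)$ (here the entries are $+1$, so I would either adjust signs or note that the quasideterminant $|M_n|_{1n}$ is computed with the superdiagonal $1$'s playing the role of the $-1$'s in Definition \ref{def:qdet}, absorbing the sign). The top-right entry is $a_{1n} = W_{n,0} = x_n$, and the remaining entries above the diagonal are the rank polynomials $W_{n-i+1,\,n-j}$ for $i < j$. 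The key structural point is that deleting the first row and column of $M_n$ produces exactly $M_{n-1}$ (shifted), because the $(i,j)$-entry $W_{n-i+1,n-j}$ restricted to $i,j \geq 2$ reads $W_{(n-1)-(i-1)+1,\,(n-1)-(j-1)}$, so the iterated submatrices $M_k = (M_{k+1})^{11}$ coincide with the rank-$k$ matrices. This compatibility is what lets the recursion of Proposition \ref{quasidetExpansion} be applied directly.

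With the identification in hand, I would invoke the second recursion of Proposition \ref{quasidetExpansion},
\begin{align}
	|M_n|_{1n} = \sum_{k=1}^{n} P(k-1)\, a_{kn} = \sum_{k=1}^{n} |M_{k-1}|_{1,k-1}\, W_{n-k+1,0}\,\text{-type terms},
\end{align}
and show that this matches the antipode recursion obtained by solving \eqref{eq:antipodeRec}. The cleanest route is a double induction: assume $S(x_j) = |M_j|_{1j}$ for all $j < n$, substitute these into the unwound form of \eqref{eq:antipodeRec}, and verify termwise that the resulting expression is precisely the quasideterminant expansion. Here I would lean on the antipode formula of Schmitt (the alternating sum over chains quoted in the excerpt) as an independent cross-check, since expanding the quasideterminant $|M_n|_{1n}$ via the nonrecursive formula of Proposition \ref{quasiPoly} produces a signed sum over chains $0_P = x_0 < \cdots < x_k = 1_P$ of products of intervals, which is exactly Schmitt's antipode.

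The main obstacle I anticipate is bookkeeping the sign and the noncommutative ordering simultaneously. In the commutative uniform-family case of \cite{schmitt1994iha} the determinant expansion is insensitive to the order of factors, so one only tracks signs; in the noncommutative setting the quasideterminant is order-sensitive and there is no global sign, so I must confirm that the order in which intervals $[x_{i-1}, x_i]$ are multiplied in the quasideterminant expansion agrees with the order dictated by the antipode being an \emph{anti}homomorphism (reversing products). The uniformity hypotheses in Definition \ref{uniform}, particularly condition (1) guaranteeing that $[y, 1_P]$ is again indecomposable, are what ensure the intervals factor correctly and that the rank polynomials $W_{n,k}$ populate the matrix consistently; verifying that this ordering matches the left-to-right reading in the nonrecursive formula of Proposition \ref{quasiPoly} is the delicate point on which the whole argument turns.
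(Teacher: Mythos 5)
Your high-level plan coincides with the paper's (define a candidate antipode by $S'(x_n)=|M_n|_{1n}$, match a quasideterminant recursion against the antipode recursion, conclude by induction and uniqueness), but the specific pairing you chose does not close, and the ``delicate point'' you defer is exactly where it breaks. You start from $m\circ(S\otimes\mathrm{id})\circ\Delta(x_n)=0$, i.e.\ $\sum_{k=0}^n S(W_{n,k})\,x_k=0$. This forces you to control $S$ on the full rank polynomials $W_{n,k}$, which are products of generators; the induction hypothesis only gives $S$ on single generators, and unwinding $S(W_{n,k})$ through the antihomomorphism property is precisely the unbounded bookkeeping you flag without resolving. Worse, the quasideterminant recursion you then invoke --- the second formula $P(n)=\sum_{k=1}^n P(k-1)\,a_{kn}$ of Proposition \ref{quasidetExpansion} --- does not apply here: in that formula the factors $P(k-1)$ are quasideterminants of the \emph{top-left} $(k-1)\times(k-1)$ corners of $M_n$, whose entries are $W_{n-i+1,\,n-j}$ with $i,j\le k-1$. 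These are \emph{not} the matrices $M_{k-1}$, whose entries are $W_{(k-1)-i+1,\,(k-1)-j}$; only the bottom-right corners of $M_n$ are self-similar, as your own (correct) observation $(M_{k+1})^{11}=M_k$ shows. So the claimed identity $|M_n|_{1n}=\sum_k |M_{k-1}|_{1,k-1}\cdot(\cdots)$ is false as written, and no inductive identification of those factors with $S(x_{k-1})$ is available along this route. (The column expansion is the right tool for the Bell-polynomial theorem, where the top-left corners of $\mathbf{B}_n$ are again Bell matrices --- perhaps the source of the mix-up --- but not for the antipode matrix.)

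The repair is to flip both choices, which is what the paper does. Use the other convolution identity, $m\circ(\mathrm{id}\otimes S)\circ\Delta(x_n)=0$, which reads $\sum_{k=0}^n W_{n,k}\,S(x_k)=0$ and involves $S$ only on single generators, with $W_{n,k}$ multiplying from the left; and pair it with the \emph{first} formula of Proposition \ref{quasidetExpansion} (expansion along the first row), $P(n)=\sum_{k=0}^{n-1} a_{1,n-k}|M_k|_{1k}$, whose submatrices $M_k$ are exactly the self-similar bottom-right corners you identified. Taking $a_{ij}=-W_{n-i+1,\,n-j}$ also disposes of your sign worry cleanly: the quasideterminant satisfies $|-A|_{pq}=-|A|_{pq}$, so the subdiagonal $+1$'s of $M_n$ versus the $-1$'s of Proposition \ref{quasiPoly} cost a single overall sign, and one gets $|M_n|_{1n}=-\sum_{k=0}^{n-1}W_{n,k}|M_k|_{1k}$, which is verbatim the antipode recursion $S(x_n)=-\sum_{k=0}^{n-1}W_{n,k}S(x_k)$; induction (equivalently, uniqueness of the antipode) finishes the proof, with the left-to-right ordering matching automatically rather than needing the careful verification you anticipated. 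Your proposed cross-check via Schmitt's chain formula is consistent with this --- expanding $|M_n|_{1n}$ by the nonrecursive formula of Proposition \ref{quasiPoly} does produce a signed sum over chains --- but in the noncommutative setting the ordering of the interval factors in that chain formula must itself be derived from the $\mathrm{id}\star S$ identity, so it cannot substitute for fixing the recursion.
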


\begin{proof}
The proof mimics the one for commutative incidence Hopf algebras in \cite{schmitt1994iha}. Define the algebra map $S':H(\mathcal{P}) \rightarrow H(\mathcal{P})$ by 
\begin{align}
	S'(x_n) = |M_n|_{1n}.
\end{align}
We want to show that 
\begin{align}
	\mu \circ (id \otimes S') \circ \Delta(x_n) = 0,
\end{align}
for all $n\geq 1$. In other words, that
\begin{align}
	\sum_{k= 0}^n W_{n,k}S'(x_k) = 0.
\end{align}

By Proposition \ref{quasidetExpansion}, with $a_{ij} = -W_{n-i+1, n-j}$, we have 
\begin{align}
	|M_n|_{1n} = \sum_{k=0}^{n-1} -W_{n,k} |M_k|_{1k}.
\end{align} 
We get
\begin{align}
	S'(x_n) = \sum_{k=0}^{n-1} -W_{n,k} S'(x_k),
\end{align}
so 
\begin{align}	
	0 = \sum_{k=0}^n W_{n,k}S'(x_k).
\end{align}
By uniqueness of the antipode, the result follows.
\end{proof}

Note that if the uniform family is commutative we obtain the determinantal formula of Schmitt (by Remark \ref{commQuasiPoly}). 
\begin{align}
	S(x_n) = (-1)^n |M_n|.
\end{align}

\begin{remark}
One can recover the so-called Möbius function from the zeta function in an incidence Hopf algebra by composing with the antipode:
\begin{align}
	\mu_P = \zeta_P \circ S_P,
\end{align}
viewed as elements in the incidence algebra associated to $P$. The quasideterminantal formulation above then gives a new description of the Möbius function in noncommutative incidence Hopf algebras. This procedure will be exemplified for variants of the Faà di Bruno Hopf algebras in Section \ref{Sect:mobius}.

\end{remark}


\subsubsection{The commutative Faà di Bruno Hopf algebra}
\label{Sect:FdB}

The commutative Faà di Bruno Hopf algebra has been described many times in the literature, see e.g. \cite{joni1979cab, figueroa2005fdb, frabetti2014fio}. We will give a quick refresher, describing it both directly as a Hopf algebra on a polynomial ring and as an incidence Hopf algebra.

The Faà di Bruno Hopf algebra is the graded polynomial ring $\mathbb{K}[x_1, x_2, \dots]$, where $\deg(x_n) = n$. The counit is $\epsilon(x_n) = \delta_{n,0}$, where $x_0 = 1$ and the coproduct is given by
\begin{align}\label{coprodFdB}
	\Delta(x_n) &= \sum_{k=0}^n \left(\sum_{\underset{\scriptstyle k_1 + 2k_2 + \cdots + nk_n = n-k}{k_0 + k_1 + \cdots + k_n = k+1}} 
	\frac{(k+1)!}{k_0! k_1! \cdots k_n!} x_1^{k_1} \cdots x_n^{k_n}\right)\otimes x_k \\
			 &= \sum_{k=0}^n \frac{(k+1)!}{(n+1)!}B_{n+1,k+1}(x_0,2!x_1, 3!x_2, \ldots) \otimes x_k,
\end{align}
where $B_{n+1,k+1}$ are the commutative partial Bell polynomials, and $x_0 = 1$. The coproduct is extended multiplicatively. 

\begin{remark} Note that a very simple way to encode the coproduct on the generators $x_n$ results by considering the element $x:= 1+ \sum_{n>0} t^n x_n$. In fact, one can show that 
\begin{align}
	\Delta(x) = \sum_{n \ge 0} x^{n+1} \otimes x_n.	
\end{align}
\end{remark}

By turning to a new set of generators $X_j:=(j+1)!x_j$ the formula in (\refeq{coprodFdB}) simplifies a bit:
\begin{align}\label{coprod-fdb}
	\Delta(X_n)=\sum_{k=0}^n B_{n+1,k+1}(X_0,X_1, X_2, \ldots) \otimes X_k.
\end{align}
We obtain:
\begin{align}
\begin{split}
\Delta(X_0) &= X_0 \otimes X_0 \\
\Delta(X_1) &= X_1 \otimes X_0 + X_0 \otimes X_1\\
\Delta(X_2) &= X_2 \otimes X_0 + X_0 \otimes X_2 + 3X_1 \otimes X_1 \\
\Delta(X_3) &= X_3 \otimes X_0 + X_0 \otimes X_3 + (3X_1^2 + 4X_2) \otimes X_1 + 6X_1 \otimes X_2\\
\Delta(X_4) &= X_4\otimes X_0+X_0\otimes X_4+(10 X_1X_2+5X_3)\otimes X_1+(10 X_2+15 X_1^2)\otimes X_2 \\ &\quad + 10X_1\otimes X_3.
\end{split}
\end{align}
Note that $X_0$ is considered an idempotent. The bialgebra is graded and connected, and therefore automatically a Hopf algebra, denoted by $\H_{FdB}$. Using the Sweedler notation, $\Delta(X_n) - X_n \otimes X_0 - X_0 \otimes X_n:=\sum_{(X_n)}X_n' \otimes X_n''$, the antipode is given recursively as (see e.g. \cite{manchon2006haf}):
\begin{align}
S(X_n) &= -X_n - \sum_{(X_n)} X_n' S(X_n'')\\ 
&= -X_n - \sum_{(X_n)} S(X_n')X_n''.
\end{align}
For example:
\begin{align}
S(X_1) &= -X_1 \\
S(X_2) &= -X_2 + 3X_1^2 \\
S(X_3) &= -X_3 + 10X_1X_2 - 15X_1^3\\
S(X_4)&=-X_4+15X_1X_3+10X_2^2-105X_1^2X_2+105X_1^4.
\end{align}

\paragraph{The Faà di Bruno Hopf algebra as an incidence Hopf algebra.}

Let $\mathcal {SP}$ be the family of posets isomorphic to the set $\mathcal {SP}(A)$ of all partitions of some nonempty finite set $A$. The partial order on set partitions is given by refinement: $S \leq T$ if and only if all the blocks of $S$ are contained in blocks of $T$. We denote by $0_A$ or $0$ the partition by singletons, and by $1_A$ or $1$ the partition with only one block. Let $\mathcal Q$ be the family of posets isomorphic to the cartesian product of a finite number of elements in $\mathcal {SP}$. If $S$ and $T$ are two partitions of a finite set $A$ with $S\le T$, the partition $S$ restricts to a partition of any block of $T$. Denoting by $W/S$ the set of those blocks of $S$ which are included in some block $W$ of $T$, any partition $U$ such that $S\le U\le T$ yields a partition of the set $W/S$ for any block $W$ of $T$. This in turn yields the following poset isomorphism:
\begin{equation}
\label{eq:partitions}
	[S,T]\sim\prod_{W\in A/T} \mathcal {SP}(W/S).
\end{equation}
This shows that $\mathcal Q$ is interval closed (and hereditary by definition). Reciprocally, any element of $\mathcal Q$, isomorphic to the cartesian product of, say, $k$ elements of $\mathcal SP$, is obviously isomorphic to an interval $[0, P]$ where $P$ is a (well-chosen) partition of a finite set into $k$ blocks.

\begin{proposition}[{\cite[Example 14.1]{schmitt1994iha}}]
The standard reduced incidence Hopf algebra $H(\mathcal Q)$ is isomorphic to the Fa\`a di Bruno Hopf algebra.
\end{proposition}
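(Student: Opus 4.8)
The plan is to exhibit an explicit isomorphism of Hopf algebras $\Psi : H(\mathcal Q) \to \mathcal H_{FdB}$ and verify that it respects both the algebra and coalgebra structures. First I would identify the generators on each side. By the preceding discussion, every indecomposable element of $\mathcal Q$ under the standard reduced equivalence (poset isomorphism) is the class $\overline{\mathcal{SP}(A)}$ of the full partition lattice of a set $A$, and this class depends only on $|A|$; write $\overline{\mathcal{SP}(n)}$ for the class with $|A| = n$. The product in $H(\mathcal Q)$ is cartesian product of posets, and by the reduced relation a one-element set acts as the unit, so $H(\mathcal Q)$ is the free commutative algebra on the generators $\overline{\mathcal{SP}(n)}$ for $n \geq 2$ (the $n=1$ case being the unit). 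I would set $\Psi\big(\overline{\mathcal{SP}(n+1)}\big) = x_n$, matching the polynomial generator $x_n$ of $\mathcal H_{FdB}$; the grading should be checked to correspond, noting $\mathcal{SP}(n+1)$ has rank $n$.

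Second, and this is the heart of the matter, I would compute the coproduct in $H(\mathcal Q)$ on the generator $\overline{\mathcal{SP}(A)}$ with $|A| = n+1$ and match it against Equation \eqref{coprodFdB}. Using the incidence coproduct
\begin{align}
	\Delta\big(\overline{\mathcal{SP}(A)}\big) = \sum_{S \in \mathcal{SP}(A)} \overline{[0_A, S]} \otimes \overline{[S, 1_A]},
\end{align}
I would invoke the interval isomorphism \eqref{eq:partitions}: for a partition $S$ of $A$ into blocks of sizes $k_0+1$ copies... more precisely, if $S$ has $j$ blocks of respective sizes $b_1, \dots, b_j$, then $[0_A, S] \sim \prod_i \mathcal{SP}(b_i)$ and $[S, 1_A] \sim \mathcal{SP}(j)$. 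Grouping the sum over $S$ by the multiset of block sizes and by the number of blocks $k+1$, the number of partitions with a prescribed profile $(k_0, k_1, \dots)$ (where $k_m$ counts blocks of size $m+1$, say) produces exactly the multinomial $\tfrac{(k+1)!}{k_0! \cdots k_n!}$-type coefficients appearing in \eqref{coprodFdB}, while the monomial $\overline{[0_A,S]} = \prod_m \overline{\mathcal{SP}(m+1)}^{k_m}$ maps under $\Psi$ to $x_1^{k_1}\cdots x_n^{k_n}$ and the right tensor factor $\overline{\mathcal{SP}(k+1)}$ maps to $x_k$. The bookkeeping of block-size profiles against the partial Bell polynomial $B_{n+1,k+1}$ via the partition formula \eqref{explicitCPBell} is what makes the two coproducts agree.

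The main obstacle I anticipate is the combinatorial alignment in this second step: one must carefully track the two independent constraints in \eqref{coprodFdB}, namely $k_0 + k_1 + \cdots + k_n = k+1$ (total number of blocks) and $k_1 + 2k_2 + \cdots + nk_n = n-k$ (sizes summing correctly once the index shift is accounted for), and confirm that counting partitions of an $(n+1)$-set by block-size profile reproduces precisely these indices together with the correct multiplicity. The index shifts — $\mathcal{SP}(n+1)$ has rank $n$ and corresponds to $x_n$, and $B_{n+1,k+1}$ rather than $B_{n,k}$ appears — are the delicate part and must be handled without off-by-one errors. Alternatively, since the preceding section establishes that in a uniform family the coproduct is encoded by the rank polynomials $W_{n,k}$, I would verify that $\mathcal Q$ is a uniform family and that its rank polynomials coincide with the scaled partial Bell polynomials appearing in \eqref{coprodFdB}; this reduces the verification to the single identity $W_{n,k} = \tfrac{(k+1)!}{(n+1)!}B_{n+1,k+1}(x_0, 2!x_1, \dots)$, after which the coproduct formula $\Delta(x_n) = \sum_k W_{n,k} \otimes x_k$ matches by construction. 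Finally I would note that $\Psi$ is manifestly a bijection on generators extending to an algebra isomorphism, that both Hopf algebras are graded connected, and hence that matching the coproducts and counits suffices, the antipodes being then automatically intertwined.
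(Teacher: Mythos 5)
Your strategy coincides with the paper's proof: denote by $X_n$ the class of the full partition lattice $\mathcal{SP}(\{1,\dots,n+1\})$, expand the incidence coproduct, use the interval factorization \eqref{eq:partitions} to identify $\overline{[0,S]}$ with a monomial in the generators and $\overline{[S,1]}$ with the generator indexed by the number of blocks of $S$, and count partitions by block-size profile to recognize the partial Bell polynomials via \eqref{explicitCPBell}. This is exactly the paper's argument, and your supporting remarks (heredity and interval-closedness of $\mathcal Q$, freeness on the indecomposable classes, graded connectedness making the antipode automatic) are all fine.

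There is, however, one concrete error in the central step: with your map $\Psi\big(\overline{\mathcal{SP}(n+1)}\big)=x_n$, the coproducts do \emph{not} match \eqref{coprodFdB}. The number of partitions of an $(n+1)$-set with $k_j$ blocks of size $j+1$ is $\frac{(n+1)!}{\prod_j((j+1)!)^{k_j}\,k_j!}$, not $\frac{(k+1)!}{k_0!\cdots k_n!}$; these are the coefficients of $B_{n+1,k+1}(X_0,X_1,\dots)$ in the rescaled generators $X_j=(j+1)!\,x_j$, i.e.\ of the coproduct \eqref{coprod-fdb}, and they become the multinomials of \eqref{coprodFdB} only after that base change. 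Concretely, for $n=2$ there are three partitions of $\{1,2,3\}$ into a pair and a singleton, so the incidence coproduct of $\overline{\mathcal{SP}(3)}$ contains $3\,\overline{\mathcal{SP}(2)}\otimes\overline{\mathcal{SP}(2)}$, whereas \eqref{coprodFdB} gives $\Delta(x_2)=x_2\otimes 1+1\otimes x_2+2\,x_1\otimes x_1$: your $\Psi$ is an algebra isomorphism but not a coalgebra map. The repair is one line and is precisely what the paper does, namely sending $\overline{\mathcal{SP}(n+1)}$ to $X_n=(n+1)!\,x_n$ (equivalently, matching against \eqref{coprod-fdb} and recording the base change at the end). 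Note that your alternative route via rank polynomials already states the correct scaled identity $W_{n,k}=\frac{(k+1)!}{(n+1)!}B_{n+1,k+1}(x_0,2!\,x_1,\dots)$, so once the normalization is made explicit in the direct computation as well, your argument closes.
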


\begin{proof}
Denote by $X_n$ the isomorphism class of $\mathcal{SP}(\{1,\ldots, n+1\})$. Note that $X_0=1$ is the unit.
In view of \eqref{eq:partitions}, we have:
\begin{eqnarray}\label{fdbsp}
	\Delta(X_n)	&=& \sum_{S\in\mathcal{SP}(\{1,\ldots, n+1\})} \overline{[0,S]}\otimes\overline{[S,1]}\notag\\
			 	&=& \sum_{S\in\mathcal{SP}(\{1,\ldots, n+1\})} \left(\prod_{W\in \{1,\ldots , n+1\}/S}\overline{\mathcal{SP}(W)}\right) 
				\otimes \overline{\mathcal{SP}(\{1,\ldots, n+1\}/S)}.
\end{eqnarray}
The coefficient in front of $X_1^{k_1}\cdots X_n^{k_n}\otimes X_k$ in \eqref{fdbsp} above is equal to the number of partitions of $\{1,\ldots ,n+1\}$ with $k_j$ blocks of size $j+1$ (for $j=1$ to $n$), $k+1$ blocks altogether, and $k_0=k+1-k_1-\cdots -k_n$ blocks of size $1$, which in turn gives back \eqref{coprod-fdb}:
{\small
\begin{align}
\begin{split}
	\Delta (X_n)	&=\sum_{k=0}^n B_{n+1,k+1}(X_0,X_1,\ldots ,X_n)\otimes X_k \\ 
				&=\ \sum_{k=0}^n 
	\left(\sum_{{\scriptstyle k_0+k_1+\cdots +k_n=k+1, \atop \scriptstyle k_1+2k_2+\cdots +nk_n=n-k}} 
	\dfrac{1}{k_0! k_1!\cdots k_n!}\ \left(\frac{X_1}{2!}\right)^{k_1}\cdots \left(\frac{X_n}{(n+1)!}\right)^{k_n}\right) 
	\otimes X_k.
\label{doubilet}
\end{split}
\end{align}
}This is the formula for the coproduct in the Faà di Bruno Hopf algebra, modulo the base change $x_j:=\frac{d_j}{(j+1)!}$.
\end{proof}

It follows that the partial commutative Bell polynomials are the rank polynomials (defined in Section \ref{determinantAntipode}) of the commutative Faà di Bruno incidence Hopf algebra:
\begin{align}
	W_{n,k} = B_{n+1,k+1}(X_0,X_1,X_2,\dots), \qquad X_0 = 1,
\end{align}
and Theorem \ref{incidenceAntipode} therefore gives the following description of the antipode.
\begin{theorem}[{\cite[Example 14.1]{schmitt1994iha}}] The antipode in the commutative Faà di Bruno Hopf algebra can be written as
\begin{align} 
	S(x_n) &= (-1)^n \det\big(B_{n-i+2, n-j+1}(1,x_2,x_3, \dots)\big)_{1\leq i,j \leq n},
\end{align}
where $B_{n,k}$ are the commutative partial Bell polynomials.
\end{theorem}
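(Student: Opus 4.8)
The plan is to specialize the general quasideterminantal antipode formula of Theorem~\ref{incidenceAntipode} to the commutative Fa\`a di Bruno incidence Hopf algebra and then collapse the quasideterminant into an ordinary determinant. First I would invoke the identification of the rank polynomials already established, namely
\begin{align}
	W_{n,k} = B_{n+1,k+1}(X_0, X_1, X_2, \dots), \qquad X_0 = 1,
\end{align}
so that the matrix $M_n$ from Section~\ref{determinantAntipode}, whose $(i,j)$ entry is $W_{n-i+1,\,n-j}$, has entries expressible directly in terms of commutative partial Bell polynomials. Writing this out, the $(i,j)$ entry becomes $B_{n-i+2,\,n-j+1}(X_0, X_1, \dots)$, matching the matrix appearing in the claimed statement once we set $X_0 = 1$ and relabel the generators $x_j$ in place of $X_j$ (with the understanding that $x_1$ plays the role of the idempotent, hence the entry $1$ in the first slot).

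Next I would apply Theorem~\ref{incidenceAntipode}, which gives $S(x_n) = |M_n|_{1n}$ for a noncommutative uniform family. Since the Fa\`a di Bruno family $\mathcal{Q}$ arises from poset isomorphism as the equivalence relation, the resulting incidence Hopf algebra is commutative, so all entries of $M_n$ commute. The crucial step is then to pass from the quasideterminant to the classical determinant using Remark~\ref{commQuasiPoly}: the matrix $M_n$ has exactly the shape of the matrix in Proposition~\ref{quasiPoly} (upper-triangular-type with $-1$ on the subdiagonal and zeros below), so its relevant minor $\det M_n^{1n}$ equals $(-1)^{n-1}$, and hence
\begin{align}
	|M_n|_{1n} = (-1)^{1+n}\frac{\det M_n}{\det M_n^{1n}} = (-1)^{1+n}(-1)^{n-1}\det M_n = (-1)^n \det M_n.
\end{align}
This yields $S(x_n) = (-1)^n \det M_n$, which is precisely the asserted formula.

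The main obstacle I anticipate is purely bookkeeping rather than conceptual: one must verify carefully that the index conventions in the definition of $M_n$ (entry $(i,j)$ equal to $W_{n-i+1,n-j}$) together with the substitution $W_{n,k} = B_{n+1,k+1}$ align with the entry $B_{n-i+2,\,n-j+1}$ claimed in the theorem, including correctly accounting for the shift $X_0 = 1$ and confirming that the subdiagonal entries $W_{n-i+1,\,n-(i-1)} = W_{m,m} = 1$ reproduce the $-1$ pattern after the sign normalization used in Theorem~\ref{incidenceAntipode} (where $a_{ij} = -W_{n-i+1,n-j}$). I would also double-check the evaluation $B_{n,k}(1, x_2, x_3, \dots)$ in the statement, tracking how the normalized generators and the value $X_0 = 1$ propagate into the argument list of the Bell polynomials. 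Once these index and sign verifications are settled, the determinantal formula follows immediately, and it recovers Schmitt's original result as the commutative shadow of our quasideterminantal antipode.
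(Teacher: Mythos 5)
Your overall route is exactly the paper's: identify the rank polynomials $W_{n,k}$ of the commutative Fa\`a di Bruno incidence Hopf algebra with the partial Bell polynomials $B_{n+1,k+1}(X_0,X_1,\dots)$, $X_0=1$, invoke Theorem \ref{incidenceAntipode}, and collapse the quasideterminant to a determinant via Remark \ref{commutativeQuasi}. But the sign bookkeeping --- the only nontrivial content of this reduction, and precisely the point you flag as delicate --- fails as written, in two compensating ways. First, the matrix $M_n$ of Section \ref{determinantAntipode} has $+1$, not $-1$, on the subdiagonal (its subdiagonal entries are $W_{m,m}=1$; see the displayed $M_4$), so $\det M_n^{1n}=+1$, not $(-1)^{n-1}$. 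The matrix of the shape of Proposition \ref{quasiPoly}, with $-1$ on the subdiagonal, is the \emph{negated} matrix $A=(a_{ij})$ with $a_{ij}=-W_{n-i+1,n-j}$, which is what actually enters the proof of Theorem \ref{incidenceAntipode}. Second, even granting your premise, $(-1)^{1+n}(-1)^{n-1}=(-1)^{2n}=+1$, so your displayed chain evaluates to $\det M_n$, not $(-1)^n\det M_n$; the last equality is a non sequitur. Indeed the identity you assert, $|M_n|_{1n}=(-1)^n\det M_n$, is false for the paper's $M_n$: the correct commutative value is $|M_n|_{1n}=(-1)^{n+1}\det M_n$, as a check at $n=2$ shows, where $|M_2|_{12}=X_2-3X_1^2=-S(X_2)$.

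The repair is short: apply Remark \ref{commutativeQuasi} to the matrix $A$ with $a_{ij}=-W_{n-i+1,n-j}$, for which the recursion in the proof of Theorem \ref{incidenceAntipode}, and hence the antipode identity $S(x_n)=|A|_{1n}$, actually holds. Then $\det A=(-1)^n\det M_n$ and $\det A^{1n}=(-1)^{n-1}$, so
\begin{align}
	S(x_n) = |A|_{1n} = (-1)^{1+n}\,\frac{\det A}{\det A^{1n}}
	= (-1)^{1+n}\,\frac{(-1)^n\det M_n}{(-1)^{n-1}} = (-1)^n\det M_n,
\end{align}
which, after substituting $W_{n-i+1,n-j}=B_{n-i+2,n-j+1}(X_0,X_1,\dots)$ with $X_0=1$, is the stated formula. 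As written, your argument lands on the correct result only because your two sign slips cancel --- against each other, and against the paper's own loose statement of Theorem \ref{incidenceAntipode}, whose displayed $M_n$ should carry negated entries to match its proof. The derivation you give would not survive checking at $n=1$ or $n=2$, so the sign verification needs to be redone along the lines above before the proof is complete.
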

For example, 
\begin{align}
S(x_4) 	&= \left|\begin{array}{ccc} 6x_2 & 4x_3 + 3x_2^2 & x_4 \\ 1 & 3x_2 & x_3 \\ 0 & 1 & 2x_2 \end{array}\right| \\
	&= -x_4 + 10x_2x_3 - 15x_2^3.
\end{align}

\begin{remark} One may ask whether there is a q-version of the commutative Faà di Bruno Hopf algebra based on the q-Bell polynomials of \cite{johnson1996sao} (see Remark \ref{q-Bell}). Unfortunately, such a construction does not seem to be possible. From an incidence Hopf algebra point of view the problem arises because the \emph{weight} of the partitions in \cite{johnson1996sao} is not compatible with the partial order by refinement. Furthermore, q-composition is not associative, making a possible corresponding Hopf algebra quite unwieldy.
\end{remark}

\subsubsection{The noncommutative Dynkin-Faà di Bruno Hopf algebra}
\label{Sect:DFdB}

Consider the alphabet $\mathcal{A} = \{X_n\}_{n\geq 1}$. The Dynkin-Faà di Bruno Hopf algebra $\H_{DFdB}$ is the free associative algebra $\H_{DFdB} = \mathbb{K}\langle X_1, X_2, \dots \rangle$ with unit $X_0$, equipped with the coproduct
\begin{align}
	\Delta(X_n) = \sum_{k=0}^n B_{n+1,k+1} (X_0,X_1,\ldots, X_n)\otimes X_k,
\end{align}
where $B_{n+1,k+1}$ are the partial noncommutative Bell polynomials, extended multiplicatively. The counit is $\epsilon(X_n) = \delta_{n,0}$. It is graded by $|X_n| = n$, and is also connected. We have:
\begin{align}
\begin{split}
	\Delta(X_0) &= X_0 \otimes X_0 \\
	\Delta(X_1) &= X_1 \otimes X_0 + X_0 \otimes X_1\\
	\Delta(X_2) &= X_2 \otimes X_0 + X_0 \otimes X_2 + 3X_1 \otimes X_1 \\
	\Delta(X_3) &= X_3 \otimes X_0 + X_0 \otimes X_3 + (3X_1^2 + 4X_2) \otimes X_1 + 6X_1 \otimes X_2\\
	\Delta(X_4) &=X_4\otimes X_0+X_0\otimes X_4+(6 X_1X_2+4X_2X_1+5X_3)\otimes X_1  \\ 
			  &\quad+(10 X_2+15 X_1^2)\otimes X_2 +10X_1\otimes X_3.
\end{split}
\end{align}
The first disparity between the commutative and noncommutative case appears in $\Delta(X_4)$, where the term $10X_1X_2$ splits into $6X_1X_2+4X_2X_1$. Being a graded and connected bialgebra we immediately have a recursive formula for the antipode in $\H_{DFdB}$:
\begin{align}
S(X_n) &= -X_n - \sum_{(X_n)} X_n' S(X_n'')\\ 
&= -X_n - \sum_{(X_n)} S(X_n')X_n''. 
\end{align}
We get:
\begin{align}
\begin{split}
S(X_1) &= -X_1 \\
S(X_2) &= -X_2 + 3X_1^2 \\
S(X_3) &= -X_3 + 6X_1X_2 +4X_2X_1- 15X_1^3\\
S(X_4)&=-X_4+10X_1X_3+5X_3X_1+10X_2^2-45X_1^2X_2-34X_1X_2X_1- \\&\quad26X_2X_1^2+105X_1^4.
\end{split}
\end{align}

\paragraph{Noncommutative Dynkin-Faà di Bruno as an incidence Hopf algebra}\label{ncfdb}

We proceed the same way as for the Fa\`a di Bruno Hopf algebra, starting from the family $\mathcal{Q}$ of finite set partition posets made of intervals $[S,T]$, where $S<T$ are partitions of some totally ordered finite set. The equivalence relation $\sim$ will however be finer than the poset isomorphism relation.

The \emph{ordinal sum} $A\sqcup B$ of two totally ordered finite sets $A$ and $B$ is their disjoint union endowed with the unique total order extending both orders of $A$ and $B$, such that $x<y$ for any $x\in A$ and $y\in B$. The ordinal sum is obviously noncommutative. 

We assume a total order on our finite sets, and we order the blocks of a given partition by their maxima. For any partitions $S$ and $S'$ of the totally ordered sets $\{a_1,\ldots ,a_n\}$ and $\{b_1,\ldots ,b_n\}$ respectively, we will write $S\simeq S'$ if and only if there is a bijection $\tau$ from $\{a_1,\ldots ,a_n\}$ onto  $\{b_1,\ldots ,b_n\}$ which bijectively sends any block of $S$ onto a block of $S'$, and which moreover preserves the order of the maxima:
\begin{align}
	\hbox{max }S_1<\hbox{max }S_2\Longleftrightarrow 
	\hbox{max }\varphi(S_1)<\hbox{max }\varphi(S_2)
\end{align}
for any pair $S_1,S_2$ of blocks of $S$. There is a unique such bijection $\tau$ which is increasing when restricted to any block of $S$. We will call it the \emph{canonical permutation} associated to the equivalence relation $\simeq$. Now the equivalence relation on $\mathcal{Q}$ is defined by:
\begin{align}
	[S,T]\sim[S',T'] \hbox{ if and only if } S\simeq S'\hbox{ and } T\simeq T'.
\end{align}
If $S$ and $S'$ are two partitions of $A$ and $B$ respectively, we write $S\sqcup S'$ for the partition of $A\sqcup B$ obtained by concatenation. This ordinal sum of partitions yields a natural identification between set partitions of $A\sqcup B$ and ordered pairs $(S,S')$ where $S$ and $S'$ are set partitions of $A$ and $B$, respectively. The family $\mathcal{Q}$ is hereditary because of the identification of $[S,T]\times[S',T']$ with $[S\sqcup S',T\sqcup T']$. Note that $S\sqcup S'$ is not equivalent to $S'\sqcup S$ in general, hence the cartesian product $[S,T]\times[S',T']$ is not equivalent to $[S',T']\times[S,T]$.  The family $\mathcal{Q}$ is obviously interval-closed.

In order to establish that the associated incidence coalgebra is a Hopf algebra, it remains to show that the equivalence $\sim$ is order-compatible (it is obviously reduced). Let $[S,T]\sim[S',T']$, and let $\tau$ be the canonical bijection associated with the equivalence $T\simeq T'$. For any $U\in[S,T]$, let $\tau(U)$ be the partition with blocks $\tau(U_j)$, where the $U_j$ are the blocks of $U$. We clearly have $U\simeq\tau(U)$, with canonical bijection $\tau$. Hence $[S,U]\sim[S',\tau(U)]$ and $[U,T]\sim[\tau(U),T']$, which proves the assertion.

Let $X_n$ be the equivalence class of the set partition poset of $\{1,\ldots, n+1\}$. The coefficient in front of $X_{r_1}\cdots X_{r_m}\otimes X_k$ is the number of partitions of $\{1,\ldots, n+1\}$ into $k+1$ blocks $P_1,\ldots,P_{k+1}$,  where $P_j$ is of size $r_j+1$ for $j=1,\ldots,m$, and of size one for $j=m+1,\ldots,k+1$, and such that:
\begin{align}
	\max{P_1}<\cdots <\max{P_m}.
\end{align}
Note that we do not care of the max ordering of the one-sized blocks, reflecting the fact that the unit $X_0$ commutes with any other element. In view of Proposition \ref{partitionBell} and Theorem \ref{explicitBell}, the coproduct can be rewritten as:
\begin{align}
	\Delta(X_n)=\sum_{k=0}^n B_{n+1,k+1}(X_0,X_1,X_2,\ldots)\otimes X_k,
\end{align}
where the $B_{n+1,k+1}$ are the noncommutative Bell polynomials. Hence the incidence Hopf algebra described here coincides with the noncommutative Dynkin-Faà di Bruno Hopf algebra. The Dynkin-Faà di Bruno Hopf algebra turns out to be isomorphic to the noncommutative Faà di Bruno Hopf algebra of \cite{brouder2006nha}. To see this, apply the coproduct formulae of Remark 3.8, which still make sense in the noncommutative Dynkin-Faà di Bruno Hopf algebra, and compare with formula (2.16) in \cite{brouder2006nha}\footnote{We thank Jean-Yves Thibon for pointing this out to us.}. See also \cite[Section 6.1]{novelli2008nsf}.

Since the rank polynomials in this incidence Hopf algebra are the noncommutative partial Bell polynomials, Theorem \ref{incidenceAntipode} gives us the following description of the antipode:
\begin{theorem}
The antipode in the noncommutative Dynkin-Faà di Bruno Hopf algebra can be written as
\begin{align} 
	S(X_n) &= \big|\big(B_{n-i+2, n-j+1}(X_0,X_1,X_2, \dots)\big)_{1\leq i,j \leq n}\big|_{1n},
\end{align}
where $B_{n,k}$ are the noncommutative partial Bell polynomials and $|\cdot|_{1n}$ is the quasideterminant computed at the top right element.
\end{theorem}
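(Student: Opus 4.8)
The plan is to deduce the final statement as an immediate corollary of the general quasideterminantal antipode formula, Theorem \ref{incidenceAntipode}, once I have identified the rank polynomials of the noncommutative Dynkin--Fa\`a di Bruno incidence Hopf algebra. First I would observe that the construction in the preceding paragraph exhibits $\mathcal{Q}$ (with the finer equivalence $\sim$ built from the canonical permutation preserving the order of block maxima) as a noncommutative uniform family in the sense of Definition \ref{uniform}: the unique indecomposable type of rank $n$ is the class $X_n$ of the set-partition poset of $\{1,\ldots,n+1\}$, and the interval-structure isomorphism $[S,T]\sim\prod_{W}\mathcal{SP}(W/S)$ restricted to the indecomposables gives property (1), while the counting of blocks by size gives the uniqueness in property (2). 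This is the content that makes Theorem \ref{incidenceAntipode} applicable.

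The key step is then to identify the rank polynomials $W_{n,k}$ with the noncommutative partial Bell polynomials. From the coproduct formula established just above, $\Delta(X_n)=\sum_{k=0}^n B_{n+1,k+1}(X_0,X_1,X_2,\ldots)\otimes X_k$, and comparing with the defining expression $\Delta(x_n)=\sum_{k\ge 0}W_{n,k}\otimes x_k$ for the coproduct in a uniform family, I read off
\begin{align}
	W_{n,k}=B_{n+1,k+1}(X_0,X_1,X_2,\ldots).
\end{align}
This is exactly the noncommutative analogue of the identification already carried out in the commutative case, and it is justified because in both settings the combinatorial meaning of $W_{n,k}$ (summing intervals $[x,z]$ with $r[z,y]=k$) matches the count of ordered set partitions encoded by the partial Bell polynomials, via Proposition \ref{explicitBell} and Theorem \ref{partitionBell}.

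With this identification in hand, Theorem \ref{incidenceAntipode} gives $S(X_n)=|M_n|_{1n}$, where $M_n$ is the matrix whose $(i,j)$ entry is $W_{n-i+1,\,n-j}$. Substituting the Bell-polynomial expression for $W_{n,k}$, the $(i,j)$ entry becomes $B_{(n-i+1)+1,\,(n-j)+1}=B_{n-i+2,\,n-j+1}(X_0,X_1,X_2,\ldots)$, which is precisely the matrix in the statement; the second-diagonal $-1$'s and the lower zeros are inherited from the shape of $M_n$ (they correspond to $W_{m,m}=1$ and $W_{m,k}=0$ for $m<k$). Hence $S(X_n)=\big|\big(B_{n-i+2,\,n-j+1}(X_0,X_1,X_2,\dots)\big)_{1\le i,j\le n}\big|_{1n}$, as claimed. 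The only real obstacle I anticipate is the first step: carefully verifying that $(\mathcal{Q},\sim)$ genuinely satisfies both conditions of Definition \ref{uniform}, in particular checking that the finer, order-of-maxima equivalence still yields a single indecomposable type in each rank and that the interval $[y,1_P]$ of an indecomposable remains indecomposable; once this is secured, the rest is a direct reading-off of entries with no further computation.
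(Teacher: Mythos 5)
Your proposal is correct and takes essentially the same route as the paper, which likewise establishes the incidence Hopf algebra structure on the max-ordered partition family, identifies the rank polynomials as $W_{n,k}=B_{n+1,k+1}(X_0,X_1,X_2,\dots)$ from the coproduct, and then applies Theorem \ref{incidenceAntipode} with the index shift $W_{n-i+1,n-j}=B_{n-i+2,n-j+1}$. One trivial slip: the subdiagonal entries of $M_n$ are $+1$'s (exactly as your own parenthetical $W_{m,m}=1$ says, and as the paper's $S(X_5)$ example confirms), not $-1$'s --- the $-1$'s belong only to the auxiliary matrix of Proposition \ref{quasidetExpansion} used inside the proof of Theorem \ref{incidenceAntipode}.
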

Example:
\begin{align}
&\left|\begin{array}{cccc} B_{5,4} & B_{5,3} & B_{5,2} & B_{5,1}\\ 1 & B_{4,3} & B_{4,2} & B_{4,1} \\ 0 & 1 & B_{3,2} & B_{3,1} \\ 0 & 0 & 1 & B_{2,1}\end{array}\right|_{1,4} = \\
&\quad -B_{5,1} + B_{5,4}B_{4,1} + B_{5,3}B_{3,1} + B_{5,2}B_{2,1} - B_{5,4}B_{4,3}B_{3,1} - B_{5,4}B_{4,2}B_{2,1} \\
&\quad - B_{5,3}B_{3,2}B_{2,1} + B_{5,4}B_{4,3}B_{3,2}B_{2,1}\\
& \quad = S(X_5)
\end{align}

\subsubsection{Another noncommutative incidence Hopf algebra}

We start with the same hereditary interval-closed family $\mathcal{Q}$ of posets as in Section \ref{ncfdb}. For any partitions $S$ and $S'$ of the totally ordered sets $A:=\{a_1,\ldots ,a_n\}$ and $B:=\{b_1,\ldots ,b_n\}$ respectively, with $a_1<\cdots <a_n$ and $b_1<\cdots <b_n$, we write $S\cong S'$ if and only if the unique increasing bijection $\tau:A\to B$ sends any block of $S$ onto a block of $S'$. Now the equivalence relation on $\mathcal{Q}$ is defined by:
\begin{align}
	[S,T]\approx[S',T'] \hbox{ if and only if } S\cong S'\hbox{ and } T\cong T'.
\end{align}
The order-compatibility of this equivalence relation $\approx$ is obvious, giving rise to an incidence Hopf algebra $\H$. This equivalence relation is finer than the equivalence $\sim$ of Section \ref{ncfdb}, and both are finer than poset isomorphism. Hence we have surjective Hopf algebra morphisms:
\begin{align}
\H\longrightarrow\hskip -5mm\longrightarrow \H_{DFdB}\longrightarrow\hskip -5mm\longrightarrow\H_{FdB}.
\end{align}
A basis of the homogeneous component $\H_n$ is given by intervals $[S,T]$ where $S$ and $T$ are partitions of $\{1,\ldots, ,n+1\}$. One can see that $\H$ is the free associative algebra generated by intervals $[S,T]$ where $T$ is a partition which cannot be written as $T_1\sqcup T_2$ where $T_1$ is a partition of, say, $\{1,\ldots ,r\}$ and $T_2$ is a partition of $\{r+1,\ldots ,n+1\}$.

\subsubsection{Bell polynomials and Möbius inversion}\label{Sect:mobius}
One may ask for formulas expressing the generators $d_i$ of the Bell polynomial in terms of Bell polynomials. Such formulas can be found using \emph{Möbius inversion} in certain variants of the Faà di Bruno and Dynkin Faà di Bruno Hopf algebras. More precisly, we consider situations that are similar to the ones of Sections \ref{Sect:FdB} and \ref{Sect:DFdB}, but we no longer assume that the degree $0$ element is $1$. The constructions are analogous to Möbius inversion on the lattice of partitions (\cite{stanley2011ec1, rota1964otf}).

\paragraph{Commutative case.} We look at the graded commutative algebra $\D = \K\langle \{d_i\} \rangle$ from Section \ref{Sect:noncommBell}, but now without the empty word $\mathbb{I}$, and graded by $|d_i| = i-1$. We add the inverse $d_1^{-1}$ of $d_1$ to the degree $0$ part. The resulting commutative algebra $\H'$ is a bialgebra, graded but not connected, with counit $\epsilon(d_n) = \delta_{n,0}$ and coproduct
\begin{align}
\Delta'(d_n) = \sum_{k=1}^n B_{n,k}(d_1,d_2,\ldots, d_{n})\otimes d_{k}
\end{align}
We get
\begin{align}
\begin{split}
	\Delta'(d_1) &= d_1 \otimes d_1 \\
	\Delta'(d_2) &= d_2 \otimes d_1 + d_1^2 \otimes d_2\\
	\Delta'(d_3) &= d_3 \otimes d_1 + d_1^3 \otimes d_3 + 3d_1d_2 \otimes d_2 \\
	\Delta'(d_4) &= d_4 \otimes d_1 + d_1^4 \otimes d_4 + (3d_2^2 + 4d_1d_3) \otimes d_2 
				+ 6d_1^2d_2 \otimes d_3\\
	\Delta'(d_5) &= d_5\otimes d_1+d_1^5\otimes d_5+(10 d_2d_3+5d_1d_4)\otimes d_2\\ 
			   & \qquad\qquad +(10 d_1^2d_3+15 d_1d_2^2)\otimes d_3+ 10d_1^3d_2\otimes d_4.
\end{split}
\end{align}
Note that $d_1$ is the only group like element. Since the group like elements of the commutative bialgebra $\H'$ are invertible, it is a Hopf algebra (\cite{takeuchi1971fha}).\footnote{Alternatively, the same argument we will use in the noncommutative case can be used also here: it is a Hopf algebra because we can recursively define both a left and a right antipode, which then must coincide.}  The antipode can be given a recursive description by setting $S'(d_1) = d_1^{-1}$, and using either of the defining relations $S' \star \id = \id \star S' = \epsilon$. From $\id \star S' = \epsilon$ we obtain
\begin{align}
	S'(d_n) = d_1^{-n}\left( - d_1^{-1}d_n 
			- \sum_{k=2}^{n-1} B_{n,k}(d_1,d_2,\ldots ,d_{n}) S'(d_{k}) \right).
\end{align}
For example:
\begin{align}
	S'(d_2) &= - d_1^{-3}d_2\\
	S'(d_3) &= - d_1^{-4}d_3 + 3 d^{-5}_1d_2^2\\  
	S'(d_4) &=- d_1^{-5}d_4 + 10d_1^{-6}d_2d_3 - 15d_1^{-7}d_2^3.
\end{align}

The set of linear homomorphisms $\H'^* = \L(\H', \K)$ from $\H'$ to $\K$ are called the \emph{characters} of $\H'$, and they act on the endomorphisms $\End(\H')$ on $\H'$ from the left and the right via the \emph{convolution}:
\begin{align}
	\alpha \star \beta = m_{\H'} \circ (\alpha \star \beta) \circ \Delta'.
\end{align}
The characters form a group, with inverses given by composition with the antipode: $\phi^{-1} = \phi \circ S'$. The \emph{zeta character} $\zeta \in \H'^*$ is defined by $\zeta(d_i)=1$. From the coproduct in $\H'$, we find that the multiplicative map $B:=\id \star \, \zeta$, where $\id$ is the identity endomorphism, evaluated on the $d_i$ gives the Bell polynomials:
\begin{align}
	B(d_i) = \id \star \, \zeta (d_i) = m_{\H'}\circ (\id \otimes \zeta)\circ \Delta'(d_i)  
						= B_{i}(d_1,d_2,\ldots ,d_{i}).
\end{align}
The right antipode $S'$ can be used to invert this equality. We define $\mu:=\zeta \circ S'$ to be the Möbius character, such that $\id=B \star \mu$, and obtain
\begin{align}
	d_i = B \star \mu (d_i) = m_{\H'}\circ (B \otimes \mu)\circ \Delta'(d_i).
\end{align}
For example:
\begin{align}
	d_1 &= B(d_1) = B_1(d_1) \\
	d_2 &= B(d_2) - B(d_1^2)=B_2(d_1,d_2) - B_1(d_1)B_1(d_1)\\
	d_3 &= B(d_3) + 2B(d_1^3) - 3B(d_1)B(d_2)=B_3 - 3B_1B_2 + 2 B_1^3
\end{align}

\paragraph{Noncommutative case.} Similar constructions can be done also in the noncommutative setting of Section \ref{Sect:DFdB}. We assume $d_1$ is not idempotent, and that $d_1^{-1}$ exists. As before, the resulting noncommutative bialgebra $\tilde{\H} = \mathcal{D}$ is graded by $|d_i| = i-1$, but not connected. The counit is $\epsilon(d_n) = \delta_{n,0}$, and the coproduct is defined in terms of the noncommutative Bell polynomials:
\begin{align}
\tilde\Delta (d_n) = \sum_{k=1}^n B_{n,k}(d_1,d_2,\ldots, d_{n})\otimes d_{k}
\end{align}
For example,
\begin{align}
\begin{split}
	\tilde\Delta(d_1) &= d_1 \otimes d_1 \\
	\tilde\Delta(d_2) &= d_2 \otimes d_1 + d_1^2 \otimes d_2\\
	\tilde\Delta(d_3) &= d_3 \otimes d_1 + d_1^3 \otimes d_3 
				+ (2d_1d_2 + d_2d_1) \otimes d_2 \\
	\tilde\Delta(d_4) &= d_4 \otimes d_1 + d_1^4 \otimes d_4 
				+ (3d_2^2 + d_3d_1 + 3d_1d_3) \otimes d_2 \\
			      & \hspace{3.5cm}	+ (3d_1^2d_2 + d_2d_1^2 + 2d_1d_2d_1)\otimes d_3
\end{split}
\end{align}

We can define the antipode $\tilde{S}$ on $\tilde{H}$ by setting $\tilde{S}(d_1) = d_1^{-1}$ and using $\id \star \tilde{S} = \epsilon$ to obtain a recursion
\begin{align}
	\tilde{S}(d_n) = d_1^{-n}\left( - d_nd_1^{-1} 
	- \sum_{k=2}^{n-1} B_{n,k}(d_1,d_2,\ldots ,d_n) \tilde{S}(d_k) \right).
\end{align}
For example:
\begin{align}
	\tilde{S}(d_2) &= - d_1^{-2}d_2d_1^{-1}\\
	\tilde{S}(d_3) &= - d_1^{-3}d_3d_1^{-1} + 2d_1^{-2}d_2d_1^{-2}d_2d_1^{-1} 
			+ d_1^{-3}d_2d_1^{-1}d_2d_1^{-1}
\end{align}
In addition, using $\tilde{S} \star id= \epsilon$, we get\footnote{Note that the two formulas give the same $S$, because, writing $S'$ for the antipode defined by $S' \star \id = \epsilon$ (i.e. the left antipode), we have $S = S \star \epsilon = S \star (\id \star S') = (S \star \id) \star S' = S'$.}
\begin{align}
\tilde{S}(d_n) = \left( - d_1^{-n}d_n - \sum_{k=2}^{n-1} \tilde{S}(B_{n,k}(d_1,d_2,\ldots, d_{n})) d_k \right)d_1^{-1}.
\end{align}

We again consider characters in $\tilde{\H}$, which act on the endomorphisms via convolution. We find that the multiplicative map $B := \id \star \, \zeta$ evaluated on the $d_i$ results in the noncommutative Bell polynomials:
\begin{align}
	B(d_i) = \id \star \, \zeta (d_i) = m_\mathcal{D}\circ (d \otimes \zeta)\circ\Delta'(d_i)  
		= B_{i}(d_1,d_2,\ldots ,d_{i}).
\end{align}
Composition of $\zeta$ with the antipode $\tilde{S}$ gives the Möbius character $\mu:=\zeta \circ \tilde{S}$ with values in $\K$, satisfying $\zeta \star \mu = \mu \star \zeta=\epsilon$. We obtain $d=B \star \mu$, so
\begin{align}
d_i= B \star \mu (d_i) = m_\mathcal{D}\circ(B \otimes \mu)\circ \Delta'(d_i).
\end{align}
For example:
\begin{eqnarray}
	d_1 &=& B(d_1) = B_1(d_1) \\
	d_2 &=& B(d_2) - B(d_1^2)=B_2(d_1,d_2) - B_1(d_1)^2\\
	d_3 &=& B(d_3) + 2B(d_1^3) - 2B(d_1)B(d_2)  - B(d_2)B(d_1)\\
	        &=& B_3(d_1,d_2,d_3) - 2B_1(d_1)B_2(d_1,d_2) - B_2(d_1,d_2)B_1(d_1) + 2 B_1(d_1)^3
\end{eqnarray}

\section*{Acknowledgements}
We wish to thank Henning Lohne and Hans Munthe-Kaas for useful discussions. K.E.-F. is supported by a Ramón y Cajal research grant from the Spanish government. A.L. was supported by an ERCIM ``Alain Bensoussan Fellowship'', funded by the European Union Seventh Framework Programme (FP7/2007-2013) under grant agreement no. 246016. D.M. and K.E.-F. were supported by Agence Nationale de la Recherche (projet CARMA).

\bibliography{bibliography}

\end{document}